\newproof{proof}{Proof}
\numberwithin{equation}{section}
\newtheorem{theorem}{Theorem}[section]
\newtheorem{lemma}[theorem]{Lemma}
\newdefinition{definition}[theorem]{Definition}
\newdefinition{example}[theorem]{Example}
\begin{document}

\title
{Time response of a scalar dynamical system with multiple delays via Lambert W functions}

\author[thu]{Shuo-Tsung Chen}
\ead{shough34@yahoo.com.tw}
\author[nchu]{Shun-Pin Hsu}
\ead{shsu@nchu.edu.tw}
\author[thu]{Huang-Nan Huang\corref{cor}}
\ead{nhuang@thu.edu.tw}
\author[nchu]{Bin-Yan Yang}
\ead{youngbinyeah@gmail.com}

\address[thu]{Department of Applied Mathematics, Tunghai University,Taichung 40704, Taiwan}

\address[nchu]{Department of Electrical Engineering, National Chung-Hsing University, Taichung 40704, Taiwan}
\cortext[cor]{Corresponding author}

\begin{abstract}
 In this work, we establish the response of scalar systems with multiple 
 discrete delays based on the Laplace transform. The time response function is expressed as the sum of  
infinite series of exponentials acting on eigenvalues inside
countable branches of the Lambert W functions.
Eigenvalues in each branch of Lambert W function are computed by a numerical iteration.
Numerical examples are presented to illustrate the results obtained.
\end{abstract}

\begin{keyword}
	delay system \sep Lambert W function \sep multiply delays \sep time response
	\MSC[2010] \sep 34A45 \sep 34K06 \sep 34K07 \sep 34K35 \sep 44A10
\end{keyword}	

\maketitle

\section{Introduction}

Dynamical systems with delays play an important role
in modeling natural processes which include the influence of past effects
for a better description of the evolution.
In classical physics, life sciences, physiology, engineering system, neural 
networks, epidemiology, and economics, realistic models must take in account 
the time-delays due to the finite propagation speed to determine the future 
evolution. Many examples of real phenomena with delay effects can be found in 
\cite{Kolmanovskii1,Kolmanovskii2,kuang1,kuang2,murray,rws,smith,travis}.
Furthermore of importance in applications, the delay system is, in general,
described by delay differential equations which have
several distinct mathematical properties of ordinary and partial differential 
equations, and also provides them with a purely mathematical interest.

In this work, we consider the differential equation with discrete delays.
Let $N\in\mathbb{N}$, $a, b, h, x_0 \in\mathbb{R}$, not all zero $a_{j d}\in\mathbb{R}$, $1\le j\le N$, and 
$\phi$ is a continuous preshape function defined on $[-N h,0]$.
Consider the following delay differential equation:
\begin{align}
\dot{x}(t) &= a x(t) + \sum_{j=1}^{N} a_{j d} x(t -j h)+b u(t), \notag\\
x(0)&=x_0, \tag{DDE}\label{eqn:MDDE}\\
x(\tau) &= \phi(\tau)~,~-N h \leq \tau < 0 \notag
\end{align}
We allow the discontinuity of the preshape function $\phi$ at $t=0$, i.e., $\phi(0)$ is not necessary equal 
to $x_0$. We want to find analytically the formula to describe the effect of 
input function $u(t)$ to the system's output $x(t)$ instead of just the 
output due to a specified input via numerical method, like \texttt{dde23} in 
Matlab. For those delay systems with different lengths of delay-time, it
can also be transferred into the form given by (\ref{eqn:MDDE}). Thus our 
result can be applied without any difficulty.

For the single delay differential equation, i.e., set $N=1$, $h=1$,  
$a=0$, and $b=0$ into (\ref{eqn:MDDE})):
\[
\dot{x}(t)=a_{1} x(t-1).
\]
Refer to the paper by Coreless \emph{et. al.}, 1996 \cite{Coreless}, it is suggested by using 
Lambert W function to describe the corresponding solution as
\[
x(t)=\sum_{k=-\infty}^{\infty} c_k e^{W_k(a_{1})t}
\]
for any  reasonable  choice of $c_k$ (i.e. such that the sum makes sense). 
Here $W_k$ denotes the $k$-th branch of Lambert W function.

Introduced by Lambert and Euler in 1700's, the Lambert W function
is defined to be any function $W(x)$ satisfies
\begin{equation}
W(x) e^{W(x)} = x
\label{eqn:LambertW}
\end{equation}
Taking the logarithms on both sides of (\ref{eqn:LambertW}) leads to
\[
\log W(x) = \log(x) - W(x)
\]
after some arrangement, and it is obviously that there are infinite many solution due to the property of logarithmic functions in the complex plane. 
When $x\in\mathbb{R}$ and $-1/e\le x<0$, there are two possible real values of $W(x)$ (see Figure \ref{Fig:LambertW}a). Denote the branch satisfying $-1\le W(x)$ by 
$W_0(x)$, or just $W(x)$ when there is no confusion, 
and the branch satisfying $W(x)\le -1$ by $W_{-1}(x)$. The $W(x)$ is referred to as the \emph{principal branch} of the Lambert W function.
When $x$ is complex, we use $W_k(x)$ to denote the $k$-th branch of the Lambert W function whose ranges are shown in Figure \ref{Fig:LambertW}b. 
We can verify that $W_k(\overline{z})=\overline{W_{-k}(z)}$ for all integers
$k$.
Applications of Lambert W function, we refer to Coreless \emph{et. al}, 1996 
\cite{Coreless}, and references there in.
\begin{figure}[h]
	\begin{tabular}{cc}\hspace*{-0.75cm}
		\includegraphics[width=7.25cm]{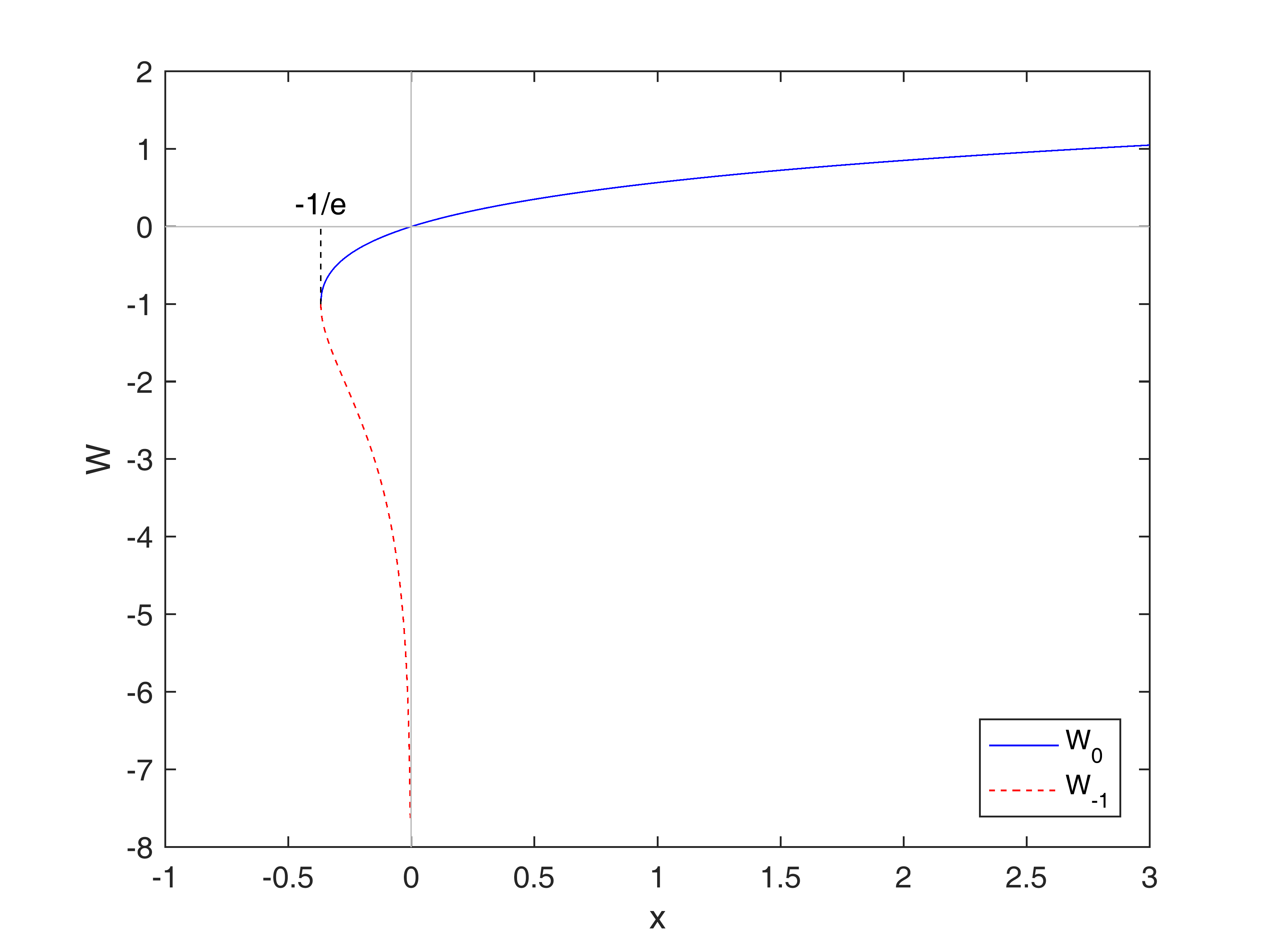} &
		\hspace*{-1.1cm}
		\includegraphics[width=7.25cm]{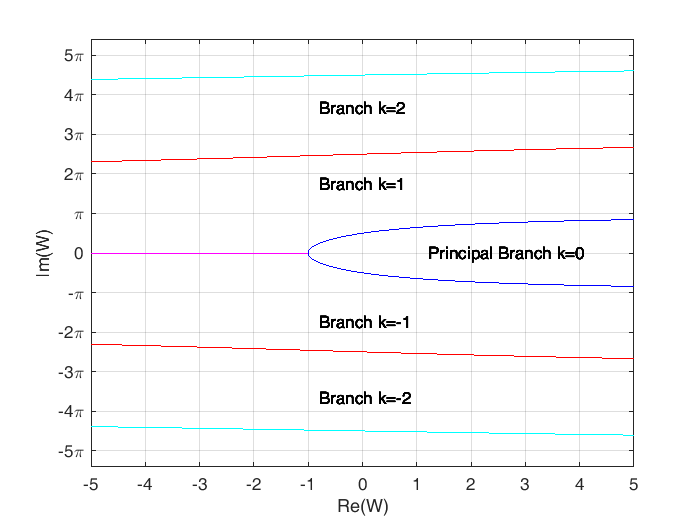}\\
		(a) Two real branches $W_0$ and $W_{-1}$ & (b) Ranges of branches
	\end{tabular}
	\caption{The Lambert W function}
	\label{Fig:LambertW}
\end{figure}

Alternative, consider a single delay system, i.e., $N=1$ in (\ref{eqn:MDDE}) 
and the differential equation becomes
\[
\dot{x}(t) = a x(t) + a_{1d} x(t - h)+b u(t).
\]
Asl and Ulsoy, 2003 \cite{Asl} and Yi \emph{et. al.}, 2010 \cite{SunYi} use Lambert W function to represent
the time response function of this equation and its extension to systems of differential equations, i.e., real numbers $a$ and 
$a_{1d}$ turn to be real matrices $A$ and $A_{1d}$ of size $n\times n$. And
only a single delay is considered in these study.

In this work, we consider for a general $N\in\mathbb{N}$, i.e., multiple 
discrete delays. Firstly, Laplace transform is applied to the 
equation (\ref{eqn:MDDE}) and state transition function is then obtained. The 
associated characteristic equation with the exponential
terms is iteratively solved via Lambert W function with infinite countable branches.
Then the transformed equation is expanded in partial fractional
expansion of the system's eigenvalues, and we can then obtain the analytical expression of time response as the sum of serial combination of exponential functions
acting on different branches of the corresponding Lambert $W$ function.
Numerical examples are provided for illustrative purpose.

This article is organized as follows. In Section 2 we use the Laplace transform to obtain the transformed (\ref{eqn:MDDE}) in
the frequency domain and then find the associated time response
via state transition function.
Section 3 gives a brief introduction of Lambert W functions and then use it to express the state transition function. The eigenvalues of the system is then solved iteratively in terms of Lambert W functions. Numerical
examples are given in section 4 for illustrative purpose. Some
concluding remark is presented in last section.

\section{Time response function by Laplace transform}

\subsection{Laplace transform}
\label{subsec:laplace}

Let $F(s)=\mathcal{L}[f(\cdot);s]=\int_0^\infty f(t) e^{-st} dt$ denote 
the Laplace transform of a exponential order function $f(\cdot)$ defined
on $[-N h,\infty)$ for some $N\in \mathbb{N}$. Let $X(s)$ and $U(s)$ be the 
Laplace transform of the $x(\cdot)$ and $u(\cdot)$, respectively. And
we can also define
\begin{equation}
\Phi_j(s)\triangleq\int_{-j h}^0 \phi(\tau) e^{-s\tau} d\tau
\label{eqn:preshape_Laplace}
\end{equation}
to be the Laplace transform of the preshape function for some $k\in\mathbb{N}$.

Let us extend the preshape function $\phi(\cdot)$ to the whole real line as $\phi_e(\cdot)$:
\[\phi_e(t)=\begin{cases}
0, t>0\\
\phi(t), t\in[-N h,0]\\
0,t<-N h,
\end{cases}
\]
Then its Laplace transform with $1\le j\le N$ is computed as following:
\begin{align*}
\mathcal{L}[\phi_e(\cdot-j h);s]
&=\int_0^\infty \phi_e(t-j h) e^{-st} dt
=\int_{-j h}^\infty \phi_e(\tau) e^{-s\tau} e^{-j s h} d\tau \\
&= e^{-j s h} \int_{-j h}^0 \phi(\tau) e^{-s\tau} d\tau
=e^{-j s h} \Phi_j(s)
\end{align*}
and it follows that 
\begin{align}
\begin{split}
&\int_0^\infty x(t-j h) e^{-s t} dt\\
&=e^{-j s h}\int_{-j h}^0 \phi(\tau) e^{-s \tau} d\tau
+e^{-j s h}\int_{0}^\infty x(\tau) e^{-s\tau} d\tau\\
&=e^{-j s h}\Phi_j(s)+e^{-j s h}X(s)
\end{split}
\label{eqn:2.1}
\end{align}

The corresponding Laplace transform of (\ref{eqn:MDDE}) is given by
\[
s X(s) -x_0 = a X(s) + \sum_{j=1}^{N} \left(a_{j d} e^{-j s h} X(s)+a_{j d} e^{-j s h} \Phi_j(s) \right)+ b U(s)
\]
which implies
\begin{equation}
\left(s-a-\sum_{j=1}^{N} a_{j d} e^{-j s h} \right) X(s) = x_0 + \sum_{j=1}^{N} a_{j d} e^{-j s h} \Phi_j(s) +b U(s)
	\label{eqn:2.2}
\end{equation}
Thus,
\begin{align}
X(s)=\frac{1}{\Delta(s)} \left(x_0+\sum_{j=1}^{N} a_{j d} e^{-j s h}\Phi_j(s)+b U(s)\right)
\stepcounter{equation} \tag{\theequation a} \label{eqn:2.3a}
\end{align}
where
\begin{align}
\Delta(s)=s-a-\sum_{j=1}^{N} a_{j d} e^{-j s h}.
\tag{\theequation b}\label{eqn:2.3b}
\end{align}
where $\Phi_j(s)$ is defined in (\ref{eqn:preshape_Laplace}).
Equation (2.3) gives us the Laplace transform version of the (\ref{eqn:MDDE}).

\subsection{State transition function}
How to find the inverse Laplace transform of (2.3)?
Let
\begin{equation}
\mathcal{L}[\Psi(t);s]\triangleq\frac{1}{\Delta(s)}
\label{eqn:2.4}
\end{equation}
which $\Psi(t)$ denotes the corresponding state transition function of (\ref{eqn:MDDE})
whose definition is described by
\begin{definition}
\label{def:state}
A function $\Psi(t)$ is called the \emph{fundamental solution} or \emph{state transition function}  of (\ref{eqn:MDDE}) if it satisfies the following conditions:	
\begin{align*}
	&\frac{d}{dt}\Psi(t)=a \Psi(t) + \sum_{j=1}^{N} a_{jd} \Psi(t-j h), t>0 \\
	&\Psi(0)=1,
	\Psi(t)=0, t<0.
\end{align*}
\end{definition}

\noindent It is obvious that the state transition function in Definition \ref{def:state} must also
satisfies
\begin{align*}
&\frac{d}{dt}\Psi(t)=\begin{cases}
a \Psi(t), &0< t\le  h \\
a \Psi(t) + a_{1d} \Psi(t-h), &h<t\le 2h \\
a \Psi(t) + a_{1d} \Psi(t-h) + a_{2d} \Psi(t-2 h), &2 h<t\le 3 h \\
\qquad\vdots\\
a \Psi(t) + a_{1d} \Psi(t-h) + \cdots+ a_{N d} \Psi(t-N h), & t> N h \\
\end{cases}\\
&\Psi(0)=1, \quad
\Psi(t)=0, t<0.
\end{align*}
This definition of fundamental solution is the same as Gu \emph{et. al.} 
\cite[check(1.11)]{KeqinGu} and is also much simpler than the one proposed by 
Yi \emph{et. al}, 2010 \cite[check (2.27)]{SunYi} when $N=1$ and Krisztin and Vas, 2011
\cite{Krisztin} for general $N$ but no $a$ terms.

Before we step into finding the inverse Laplace transform, consider the integral given by the lemma:
\begin{lemma} \label{lemma:preshape}
For $1\le j\le N$, the following relationship holds:
\begin{align}
\begin{split}
&	\mathcal{L}\left[\int_{0}^{t} \Psi(t-\tau) a_{j d} \phi_e(\tau-j h) 
	d\tau;s\right] \\
&=	\mathcal{L}\left[\int_{-j h}^{\min\{0,t-j h\}} \Psi(t-\tau-j h) a_{j d} \phi(\tau) d\tau;s\right] \\
&= \frac{e^{-s j h} a_{j d} \Phi(s)}{\Delta(s)}
\end{split}
\end{align}
\end{lemma}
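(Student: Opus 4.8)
The plan is to recognize the first integral as a Laplace convolution and then invoke the convolution theorem together with the transform of the shifted preshape function already computed in the text. I would set $g_j(\tau)\triangleq a_{jd}\,\phi_e(\tau-jh)$, so that the inner integral $\int_0^t \Psi(t-\tau)\,g_j(\tau)\,d\tau$ is exactly the convolution $(\Psi * g_j)(t)$. Since $\mathcal{L}[\Psi(\cdot);s]=1/\Delta(s)$ by (\ref{eqn:2.4}), and the computation preceding (\ref{eqn:2.1}) has already established $\mathcal{L}[\phi_e(\cdot-jh);s]=e^{-jsh}\Phi_j(s)$, the convolution theorem gives
\[
\mathcal{L}\Big[\int_0^t \Psi(t-\tau)\,a_{jd}\,\phi_e(\tau-jh)\,d\tau;s\Big]
=\frac{1}{\Delta(s)}\cdot a_{jd}\,e^{-jsh}\,\Phi_j(s),
\]
which is the claimed right-hand side (reading the $\Phi$ in the statement as $\Phi_j$). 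This settles the equality between the first and third expressions.

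For the equality between the first and second integrals, I would perform the substitution $\sigma=\tau-jh$ in the first integral, converting $\int_0^t \Psi(t-\tau)\,a_{jd}\,\phi_e(\tau-jh)\,d\tau$ into $\int_{-jh}^{t-jh}\Psi(t-\sigma-jh)\,a_{jd}\,\phi_e(\sigma)\,d\sigma$. The next step is to replace $\phi_e$ by $\phi$ and shrink the upper limit: because $\phi_e(\sigma)=\phi(\sigma)$ on $[-jh,0]$ (recall $j\le N$, so the lower cutoff at $-Nh$ is never reached) while $\phi_e(\sigma)=0$ for $\sigma>0$, the integrand vanishes once $\sigma$ passes $0$, so the effective upper limit is $\min\{0,t-jh\}$. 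Renaming $\sigma$ back to $\tau$ then yields the second integral. Since the two integrals agree as functions of $t$, their Laplace transforms coincide, closing the chain.

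I expect the main obstacle to be the bookkeeping of the limits and the support of $\phi_e$, rather than anything deep. One must verify that the argument $t-\sigma-jh$ of $\Psi$ stays nonnegative over the relevant range (so the convention $\Psi(t)=0$ for $t<0$ is not silently used), and split the analysis according to whether $t-jh\le 0$ or $t-jh>0$ to justify the $\min\{0,t-jh\}$ upper limit. A secondary point worth recording is the justification for the convolution theorem itself: both $\Psi$ and $g_j$ are of exponential order — the former by construction in Definition \ref{def:state}, the latter because $\phi_e$ is compactly supported with $\phi$ continuous hence bounded on $[-Nh,0]$ — so the transforms and their product exist in a common right half-plane.
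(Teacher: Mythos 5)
Your proposal is correct and follows essentially the same route as the paper: the paper establishes the first--second equality by exactly your substitution-and-support argument, and it obtains the third expression by writing the transform of the convolution as a double integral and swapping the order of integration (Fubini), which is just the convolution theorem proved inline rather than cited, ending with the same computation $\int_0^\infty \phi_e(\tau-jh)e^{-s\tau}\,d\tau = e^{-jsh}\Phi_j(s)$ that you reference from the text preceding (\ref{eqn:2.1}). The only cosmetic difference is that you invoke the convolution theorem as a black box (with the exponential-order justification), while the paper unfolds its proof explicitly.
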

\begin{proof}
Note that by changing of variable, we have 
\begin{align*}
	&\int_{0}^{t} \Psi(t-\tau) a_{j d} \phi_e(\tau-j h) d\tau 
	=\int_{-j h}^{t-j h} \Psi(t-\xi-j h) a_{j d} \phi_e(\xi) d\xi\\
	&=\int_{-j h}^{\min\{0,t-j h\}} \Psi(t-\xi-j h) a_{j d} \phi(\xi) d\xi.
\end{align*}
For a fixed $1\le j\le N$, we let
\begin{align*}
&\mathcal{L}\left[\int_{0}^{t} \Psi(t-\tau) a_{j d} \phi_e(\tau-j h) d\tau;s\right]\\
&=\int_0^\infty \left[\int_{0}^{t} \Psi(t-\tau) a_{j d} \phi_e(\tau-j h) d\tau\right] e^{-s t} dt
\end{align*}
and switching the order of integration leads to
\begin{align*}
&\int_{\tau=0}^\infty \int_{t=\tau}^{\infty} \Psi(t-\tau) a_{j d}\phi_e(\tau-j h) e^{-st}  dt d\tau \\
&=\int_{\tau=0}^\infty \int_{t=\tau}^{\infty} \Psi(t-\tau) e^{-s t}  dt ~~a_{i d}\phi_e(\tau-j h) d\tau \\
&=\int_{\tau=0}^\infty \int_{\xi=0}^{\infty} \Psi(\xi) e^{-s\xi}  d\xi ~~ e^{-s\tau} a_{j d}\phi_e(\tau-j h)d\tau \\
&=\int_{\tau=0}^\infty \frac{1}{\Delta(s)}~ a_{j d} \phi_e(\tau-j h) e^{-s\tau}d\tau\\
&=\frac{a_{j d}e^{-j s h}}{\Delta(s)}  \int_{\xi=-j h}^\infty \phi_e(\xi) e^{-s\xi} d\xi \\
&=\frac{a_{j d}e^{-j s h}}{\Delta(s)}  \int_{\xi=-j h}^{0} \phi(\xi) e^{-s\xi} d\xi\\
&=\frac{a_{j d}e^{-j s h}\Phi_j(s)}{\Delta(s)}  
\end{align*}
\end{proof}

\subsection{Time response function}

Lemma \ref{lemma:preshape}, gives us the time domain correspondence of the 
second term in (\ref{eqn:2.3a}). Thus the time response of (\ref{eqn:MDDE}) 
for $t\ge 0$ is then obtained by finding the time-domain function 
corresponding to equation (\ref{eqn:2.3a}), i.e.,
\begin{align*}
\begin{split}
x(t)&=\Psi(t)x_0
    + \sum_{j=1}^{N} \int_{0}^{t} \Psi(t-\tau) a_{j d} \phi_e(\tau-j h) d\tau
    + \int_{0}^{t} \Psi(t-\tau) b u(\tau) d\tau\\
    &=\Psi(t)x_0
+ \sum_{j=1}^{N} \int_{-j h}^{\min\{0,t-j h\}} \Psi(t-\tau-j h) a_{j d} \phi(\tau) d\tau
+ \int_{0}^{t} \Psi(t-\tau) b u(\tau) d\tau \\
\end{split}
\end{align*}
Based on previous discussion, the solution of delay differential equation can be expressed in terms
of fundamental matrix $\Psi(t)$ is given in the following theorem.
\begin{theorem}\label{thm:2.5.1}
Under appropriate conditions, the solution of the \emph{(\ref{eqn:MDDE})} for $t\ge 0$ is given by the following equation:
\begin{align}
\begin{split}
x(t)&=\Psi(t)x_0
+\sum_{j=1}^N \int_{-j h}^{\min\{0,t-j h\}} \Psi(t-\xi-j h) a_{jd} \phi(\xi) d\xi \\
&\hspace*{0.5cm}+ \int_{0}^{t} \Psi(t-\tau) b u(\tau) d\tau 
\end{split}
\label{eqn:2.5}
\end{align}
\end{theorem}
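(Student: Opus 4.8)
The plan is to recover $x(t)$ by inverting the frequency-domain identity (\ref{eqn:2.3a}) term by term, using the linearity of the inverse Laplace transform together with the definition (\ref{eqn:2.4}) of $\Psi$ and Lemma \ref{lemma:preshape}. Splitting (\ref{eqn:2.3a}) into its three natural summands,
\[
X(s)=\frac{x_0}{\Delta(s)}
+\frac{1}{\Delta(s)}\sum_{j=1}^{N}a_{jd}e^{-jsh}\Phi_j(s)
+\frac{bU(s)}{\Delta(s)},
\]
I would treat each piece separately and then reassemble.

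For the first piece, the defining relation (\ref{eqn:2.4}), namely $\mathcal{L}[\Psi(t);s]=1/\Delta(s)$, gives at once $\mathcal{L}^{-1}[x_0/\Delta(s);t]=x_0\Psi(t)$, which is the leading term of (\ref{eqn:2.5}). For the second piece, Lemma \ref{lemma:preshape} already supplies, for each fixed $j$ with $1\le j\le N$, the inverse transform
\[
\mathcal{L}^{-1}\!\left[\frac{e^{-jsh}a_{jd}\Phi_j(s)}{\Delta(s)};t\right]
=\int_{-jh}^{\min\{0,t-jh\}}\Psi(t-\xi-jh)\,a_{jd}\,\phi(\xi)\,d\xi,
\]
so that summing over $j$ reproduces the middle term of (\ref{eqn:2.5}).

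For the third piece, I would invoke the convolution theorem: since $1/\Delta(s)=\mathcal{L}[\Psi;s]$ and $bU(s)=\mathcal{L}[bu;s]$, their product transforms back to the convolution $\int_{0}^{t}\Psi(t-\tau)\,b\,u(\tau)\,d\tau$. Collecting the three inverse transforms and appealing to the uniqueness of the inverse Laplace transform then yields the claimed formula (\ref{eqn:2.5}), and the same uniqueness guarantees that the function so constructed is genuinely the solution of (\ref{eqn:MDDE}) rather than merely a function sharing its transform.

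The step I expect to be the main obstacle is justifying this term-by-term inversion, and in particular the convolution theorem, under the ``appropriate conditions'' hinted at in the statement. This amounts to verifying that $\Psi$, $\phi$, $u$, and hence $x$, are of exponential order, so that all the Laplace integrals converge in a common right half-plane and the Bromwich inversion is valid; one must also confirm that the interchange of integration underlying the convolution (a Fubini argument of exactly the type already carried out in the proof of Lemma \ref{lemma:preshape} for the second term) is permissible here. Once these regularity hypotheses are secured, the inverse transform is unique and the term-by-term argument becomes rigorous.
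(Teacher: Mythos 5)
Your proposal is correct in outline, but it is a genuinely different argument from the one the paper gives. The term-by-term inversion you describe---(\ref{eqn:2.4}) for the $x_0$ term, Lemma \ref{lemma:preshape} for the preshape term, the convolution theorem for the input term, then uniqueness of the inverse Laplace transform---is exactly how the paper \emph{motivates} (\ref{eqn:2.5}) in the paragraph preceding the theorem, but its formal proof is instead a direct time-domain verification: taking $N=1$, it differentiates the right-hand side of (\ref{eqn:2.5}), splits into the cases $t>h$ and $0<t\le h$, expands $\dot{\Psi}(t-\xi-h)$ and $\dot{\Psi}(t-\tau)$ on each subinterval using the piecewise equation satisfied by the fundamental solution (Definition \ref{def:state}), regroups the resulting integrals into copies of $x(t)$ and $x(t-h)$, and so recovers $\dot{x}(t)=a\,x(t)+a_{1d}\,x(t-h)+b\,u(t)$, together with the check $x(0)=\Psi(0)x_0=x_0$. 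The trade-offs are real. Your route is shorter, reuses Lemma \ref{lemma:preshape} rather than repeating its Fubini computation, and treats all $N$ uniformly; but it only shows that a solution of exponential order must coincide with the right-hand side of (\ref{eqn:2.5}), and its rigor rests on the Bromwich inversion/uniqueness theorem and the growth hypotheses you flag (the paper buries these in ``under appropriate conditions''). The paper's verification is self-contained in the time domain, needs no inversion theorem, and proves the complementary direction---that the formula actually satisfies (\ref{eqn:MDDE}) and the initial condition---at the price of case-by-case bookkeeping that it carries out explicitly only for $N=1$, asserting the general case is similar. A complete treatment would combine the two: your argument (or uniqueness of DDE solutions via the method of steps) for uniqueness, the paper's computation for existence.
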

\proof
For simplicity of the algebraic operation in the proof, we set $N=1$, i.e., 
single delay case, to explore the basic idea. For more general $N$, a similar 
procedure is applicable.
Taking the derivative to (\ref{eqn:2.5}) gives us the following equation:
\begin{align*}
\dot{x}(t)&=\dot{\Psi}(t)x_0 + b u(t) + \int_{0}^{t} \dot{\Psi}(t-\tau) b u(\tau) d\tau  \\
&
\qquad+\begin{cases}
 \int_{-h}^{0} \dot{\Psi}(t-\xi-h) a_{1d} \phi(\xi) d\xi
& \mbox{ if } t>h\\
  a_{1d} \phi(t-h) +
 \int_{-h}^{t-h} \dot{\Psi}(t-\xi-h) a_{1d} \phi(\xi) d\xi
& \mbox{ if } t\le h
\end{cases}
\stepcounter{equation} \tag{\theequation}
\label{eqn:2.6}
\end{align*} 
Since $\xi:-h\to \min\{0,t-h\}$ implies $t-\xi-h:t\to \max\{0,t-h\}$, then
\[
\frac{d}{dt}\Psi(t-\xi-h)=
\begin{cases}
a \Psi(t-\xi-h), 0<t-\xi-h\le h ~(\mbox{or } t-2h\le \xi<\min\{0,t-h\}) \\
a \Psi(t-\xi-h)+a_{1d} \Psi(t-\xi-2h), h<t-\xi-h \le t ~(\mbox{or } -h<\xi\le t-2h) 
\end{cases}
\]
And similarly,
as $\tau:0\to t$ implies $t-\tau:t\to 0$, then
\[
\frac{d}{dt}\Psi(t-\tau)=
\begin{cases}
a \Psi(t-\tau), 0<t-\tau\le h ~(\mbox{or } t-h\le \tau< t) \\
a \Psi(t-\tau)+a_{1d} \Psi(t-\tau-h), h<t-\tau\le t ~(\mbox{or } 0\le \tau < t-h) 
\end{cases}
\]
Thus
\begin{align*}
&\int_{-h}^{\min\{0,t-h\}} \dot{\Psi}(t-\xi-h) a_{1d} \phi(\xi) d\xi \\
&=\begin{cases}
\int_{-h}^{t-2h} \left[a \Psi(t-\xi-h) + a_{1d} \Psi(t-\xi-2h)
\right] a_{1d} \phi(\xi) d\xi \\
\qquad+\int_{t-2h}^{0} a \Psi(t-\xi-h) a_{1d} \phi(\xi) d\xi  
& \mbox{ if } t>h \\
\int_{-h}^{t-h} a \Psi(t-\xi-h) a_{1d} \phi(\xi) d\xi 
& \mbox{ if } t\le h
\end{cases}
\end{align*}
and
\begin{align*}
 \int_{0}^{t} \dot{\Psi}(t-\tau) b u(\tau) d\tau
=\begin{cases}
\int_{0}^{t-h} \left[a \Psi(t-\tau)+a_{1d} \Psi(t-\tau-h)
\right] b u(\tau) d\tau \\
\qquad+\int_{t-h}^{t} a \Psi(t-\tau) b u(\tau) d\tau  & \mbox{ if } t>h \\
\int_{0}^{t} a \Psi(t-\tau) b u(\tau) d\tau & \mbox{ if } t\le h
\end{cases}
\end{align*}
Therefore, the substitution of above derivative terms into (\ref{eqn:2.6}) lead to the following equation:
when $t>h$
 \begin{align*}
 \dot{x}(t)
 &=a \Psi(t) x_0 + a_{1d} \Psi(t-h) x_0 
+\int_{-h}^{t-2h} \left[a \Psi(t-\xi-h) + a_{1d} \Psi(t-\xi-2h)
\right] a_{1d} \phi(\xi) d\xi \\
&+\int_{t-2h}^{0} a \Psi(t-\xi-h) a_{1d} \phi(\xi) d\xi   +\int_{0}^{t-h} \left[a \Psi(t-\tau)+a_{1d} \Psi(t-\tau-h)
 \right] b u(\tau) d\tau\\
& +\int_{t-h}^{t} a \Psi(t-\tau) b u(\tau) d\tau
 + b u(t) \\
 &=a \Psi(t)x_0
 +a \int_{-h}^{t-2h}  \Psi(t-\xi-h) a_{1d} \phi(\xi) d\xi 
 +a \int_{t-2h}^{0}  \Psi(t-\xi-h) a_{1d} \phi(\xi) d\xi  \\
  &+a \int_{0}^{t-h} \Psi(t-\tau) b u(\tau) d\tau
  +a \int_{t-h}^{t} \Psi(t-\tau) b u(\tau) d\tau 
 + a_{1d} \Psi(t-h) x_0 \\
 &\quad+ a_{1d}\int_{-h}^{t-2h} \Psi(t-\xi-2h) a_{1d} \phi(\xi) d\xi
  + a_{1d}\int_{0}^{t-h} \Psi(t-\tau-h) b u(\tau) d\tau
  + b u(t) \\
 &=a \left[\Psi(t)x_0
 +\int_{-h}^{0}  \Psi(t-\xi-h) a_{1d} \phi(\xi) d\xi 
 + \int_{0}^{t} \Psi(t-\tau) b u(\tau) d\tau\right] \\
 &\quad+ a_{1d}\left[ \Psi(t-h) x_0
 + \int_{-h}^{t-2h} \Psi(t-\xi-2h) a_{1d} \phi(\xi) d\xi
  + \int_{0}^{t-h} \Psi(t-\tau-h) b u(\tau) d\tau\right]
  + b u(t) \\
 &= a x(t)+a_{1d} x(t) +b u(t)
 \stepcounter{equation} \tag{\theequation a}
 \label{eqn:2.7a}
 \end{align*} 
 and when $0<t\le h$
 \begin{align*}
 \dot{x}(t)
 &=a \left[\Psi(t)x_0 
  + \int_{-h}^{t-h} \Psi(t-\xi-h) a_{1d} \phi(\xi) d\xi
  + \int_{0}^{t} \Psi(t-\tau) b u(\tau) d\tau
  \right]
  +a_{1d} \phi(t-h) + b u(t)  \\
 &=a x(t) + a_{1d} x(t-h)
 + b u(t)
 \tag{\theequation b}
 \label{eqn:2.7b}
 \end{align*}
 And when $t=0$, $x(0)=\Psi(0)x_0=x_0$.
 This concludes the proof for $i=1$ case. Similarly, we can prove for the case with  $2\le i \le N$. 
\endproof

\section{Approximate Solution via Lambert W Functions}
\subsection{Lambert W function}
Introducesd by Lambert and Euler in 1700's, the Lambert W function
is defined to be any function $W(x)$ satisfies
\begin{equation}
W(x) e^{W(x)} = x
\end{equation}
As sugested by Coreless \emph{et. al.}, 1996 \cite{Coreless} and Als and Ulsoy, 2003 \cite{Asl},
the solution may be expressed in terms of infinite number of 
branches of the Lambert W function. We begin from 
(2.3):
\begin{align}
X(s)
=\frac{1}{\Delta(s)} \left(x_0+\sum_{j=1}^{N} a_{jd} e^{-j s h}\Phi_j(s)+b U(s)\right)
\tag{\ref{eqn:2.3a}}
\end{align}
where
\begin{align}
\Delta(s)=s-a-\sum_{j=1}^{N} a_{jd} e^{-j s h}.
\tag{\ref{eqn:2.3b}}
\end{align}
And the equation
\[
\Delta(s)=0
\]
is called the \emph{characteristic equation} of the system. The zeros of
the characteristic equation are the eigenvalues of the generator of the 
strong continuous semigroup defined by the solution operators \cite{Krisztin}. To find its zeros is corresponding to solving the equation
\begin{align}
 s-a-\sum_{j=1}^{N} a_{jd} e^{-j s h}=0
 \label{eqn:LambertW_1}
\end{align}
which can be re-expressed as
\[
(s-a)h e^{(s-a)h}=\left(\sum_{j=1}^{N}a_{j d} e^{-j s h}\right)h e^{(s-a)h}
=\sum_{j=1}^{N} a_{j d} h e^{-j a h} e^{-(j-1)(s-a)h}.
\]
Suppose $s_{j,k}$ is the root of the characteristic equation, i.e.,
$\Delta(s_{j,k})=0$, $s_{j,k}$ must also satisfy the Lambert W equation
\begin{align}
s_{j,k} = \frac{1}{h} W_k\left(\sum_{j=1}^{N} a_{j d} h e^{-j a h} e^{-(j-1)(s_{j,k} -a)h}\right)+a,\quad 1\le j\le N
\label{eqn:LambertW_Sk}
\end{align}
where $W_k(z)$ denotes the $k$-th branches of Lambert W function, which is a one-one function for $z\not=0\in\mathbb{C}$ (Note $W_0(0)=0$ is in the principal branch), thus $s_{j,k}\not=s_{i,\ell}$ when $k\not=\ell\in\mathbb{Z}$ for any $1\le i,j\le N$. It is obviously true that the conjugate of the solution of $\Delta (s)=0$ is also an another solution whose proof is given below.
\begin{lemma}
	Suppose 
$\Delta(S)=0$ for some complex number, then $\Delta(\bar{S})=0$ also holds where
	$\bar{S}$ is the complex conjugate of $S$.
\end{lemma}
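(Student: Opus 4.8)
The plan is to exploit the fact that all the data defining $\Delta$—the scalars $a$ and $a_{jd}$ for $1\le j\le N$, together with the delay length $h$—are real numbers. Because of this, complex conjugation commutes with $\Delta$, and the entire statement reduces to the single elementary identity $\overline{\Delta(s)}=\Delta(\bar s)$, valid for every $s\in\mathbb{C}$. Once that identity is established, the conclusion is immediate.

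First I would recall the standard arithmetic of conjugation: for $z,w\in\mathbb{C}$ one has $\overline{z+w}=\bar z+\bar w$ and $\overline{zw}=\bar z\,\bar w$; a real number is fixed by conjugation; and $\overline{e^{z}}=e^{\bar z}$. Applying these term by term to the definition $\Delta(s)=s-a-\sum_{j=1}^{N}a_{jd}e^{-jsh}$ gives
\begin{align*}
\overline{\Delta(S)}
&=\overline{S-a-\sum_{j=1}^{N} a_{jd}\,e^{-jSh}}\\
&=\bar S-a-\sum_{j=1}^{N} a_{jd}\,e^{-jh\bar S}\\
&=\Delta(\bar S),
\end{align*}
where the middle equality uses that $a$, $a_{jd}$, and $h$ are real, and that $\overline{e^{-jSh}}=e^{-jh\bar S}$. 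Assuming now $\Delta(S)=0$ and taking conjugates of both sides yields $\overline{\Delta(S)}=\bar 0=0$; combining this with the displayed identity gives $\Delta(\bar S)=0$, which is exactly the claim.

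As for the main obstacle, there is none of real substance: the argument is pure bookkeeping with complex numbers. The only step that genuinely deserves a line of justification is $\overline{e^{-jSh}}=e^{-jh\bar S}$, which follows either from the power series $e^{z}=\sum_{n\ge 0}z^{n}/n!$ together with continuity of conjugation, or directly from $e^{x+iy}=e^{x}(\cos y+i\sin y)$. Everything else is a routine application of the conjugation identities recalled above, so I expect the entire proof to be short.
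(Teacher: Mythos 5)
Your proof is correct, and it is a cleaner, coordinate-free version of what the paper does with explicit coordinates. The paper writes $S=\alpha+i\beta$, applies Euler's formula to expand $e^{-j(\alpha+i\beta)h}=e^{-j\alpha h}(\cos j\beta h - i\sin j\beta h)$, separates $\Delta(S)=0$ into a pair of real equations
\[
\alpha = a + \sum_{j=1}^{N} a_{jd}\,e^{-j\alpha h}\cos j\beta h,
\qquad
\beta = -\sum_{j=1}^{N} a_{jd}\,e^{-j\alpha h}\sin j\beta h,
\]
and then verifies by direct substitution that $\bar S=\alpha-i\beta$ makes both the real and imaginary parts of $\Delta(\bar S)$ vanish. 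Your argument reaches the same conclusion through the single identity $\overline{\Delta(s)}=\Delta(\bar s)$, which holds because $a$, each $a_{jd}$, and $h$ are real and conjugation commutes with sums, products, and the exponential. The two proofs rest on exactly the same fact (realness of the coefficients), but yours avoids the trigonometric bookkeeping entirely and makes clear that the conclusion has nothing to do with the specific form of $\Delta$: any function built from real constants and $s$ via conjugation-respecting operations has roots closed under conjugation. The paper's computation, in exchange, exhibits the real and imaginary parts explicitly, and those intermediate equations are what the authors cite again immediately afterward; your route would need a separate remark if those component equations were wanted later.
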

\begin{proof} Let $S=\alpha+i \beta$ be a solution of (\ref{eqn:LambertW_1}) where $\alpha, \beta\in\mathbb{R}$ and be substituted into (\ref{eqn:LambertW_1}):
\begin{align*}
	0 & = (\alpha+i\beta)-a-\sum_{j=1}^{N} a_{jd} 
	e^{-j (\alpha+i \beta) h} \\
	& = \alpha +i \beta-a-\sum_{j=1}^{N} a_{j d}  
	e^{-j \alpha h} \left( \cos j \beta h -i \sin j \beta h\right) \\
	& = \alpha -a - \sum_{j=1}^{N} a_{j d}	e^{-j \alpha h}  \cos j \beta h 
	+i \left( \beta+\sum_{j=1}^{N} a_{j d}  
	e^{-j \alpha h} \sin j \beta h\right) \\
\end{align*}
thus
\begin{align}
	\alpha = a + \sum_{j=1}^{N} a_{j d}	e^{-j \alpha h}  \cos j \beta h,
	\quad
	\beta = -\sum_{j=1}^{N} a_{j d} e^{-j \alpha h} \sin j \beta h.
	\label{eqn:3.4}
\end{align}
And taking the conjugate of right hand side of (\ref{eqn:LambertW_1}) gives us
\begin{align*}
 & \overline{(\alpha+i\beta)}-a-\sum_{j=1}^{N} a_{jd} \ 
      \overline{e^{-j (\alpha+i \beta) h}} \\
	& = \alpha -i \beta-a-\sum_{j=1}^{N} a_{j d}  
	e^{-j \alpha h} \left( \cos j \beta h +i \sin j \beta h\right) \\
	& = \alpha -a - \sum_{j=1}^{N} a_{j d}	e^{-j \alpha h}  \cos j \beta h 
	-i \left( \beta+\sum_{j=1}^{N} a_{j d}  
	e^{-j \alpha h} \sin j \beta h\right) =0
\end{align*}
by using the relationship (\ref{eqn:3.4}). Hence $\bar{S}$ is also a solution.
\end{proof}

\noindent\textbf{Remark 1:} From this lemma, when $s_{j,k}$, $k>0$ is a solution, then $s_{j,-k}=\bar{s}_{j,k}$ will be also a solution. When $k=0$ and $s_{j,0}$ is a real number, then there are only one eigenvalue, i.e., $s_{1,0}=s_{2,0}=\cdots=s_{N,0}$, corresponding to this principle branch $W_0$. Otherwise when $s_{j,0}$ is a complex number, then $s_{1,0}, s_{2,0}
\ldots, s_{N,0}$ will becomes one complex number plus its complex conjugate.  

\noindent\textbf{Remark 2:} For simplicity, we can renumber the roots $s_{j,k}$, $1\le j\le N$ and $k\in\mathbb{Z}$, of $\Delta(s)=0$ as $S_{n}$, with $n\in\mathbb{Z}$ and $S_0=s_{1,0}$ being the number with largest real part. If $S_{0}$ is a real number, then $S_{-n}=\bar{S}_{n}$ for $n>0$ and otherwise $S_{-(n+1)}=\bar{S}_{n}$ for $n\ge 0$. Thus $S_{n}$ with $n>0$ are located inside the upper half complex plane.
 
\noindent\textbf{Remark 3:} Once $S_{0}$ is determined, then the stability of the linear delay-equation (\ref{eqn:MDDE}) can be easy determined by check its real part to be positive or not (i.e., when $S_0$ has negative real part, then the corresponding system is stable). 

We have use the following property in the discussion of \textbf{Remarks 2} and \textbf{3}, 
\begin{lemma}\cite[Lemma 3]{Shinozaki}
For arbitrary $z\in\mathbb{C}$,
\[
\max\{\mathrm{Re}(W_k(z))|k=0,\pm1,\pm 2\ldots\}=\mathrm{Re}(W_0(z))
\]
is satisfied.
\end{lemma}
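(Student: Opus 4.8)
The plan is to convert the statement about real parts into one about moduli by exploiting the defining relation $W_k(z)e^{W_k(z)}=z$. Taking moduli of both sides gives $|W_k(z)|\,e^{\mathrm{Re}(W_k(z))}=|z|$, so for $z\neq 0$
\[
\mathrm{Re}(W_k(z))=\ln|z|-\ln|W_k(z)|.
\]
Thus $\mathrm{Re}(W_k(z))$ is largest exactly when $|W_k(z)|$ is smallest, and it suffices to show that the principal branch realises the minimal modulus, $|W_0(z)|\le|W_k(z)|$ for every $k$. The case $z=0$ is immediate, since only $W_0(0)=0$ is finite.

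Next I would use the fact that every branch value satisfies the same modulus relation: writing $w=u+iv$, every root of $we^w=z$ lies on the single level curve $C_{|z|}=\{\,u+iv:\sqrt{u^2+v^2}\,e^{u}=|z|\,\}$ in the $w$-plane. Implicit differentiation of $\sqrt{u^2+v^2}\,e^{u}=|z|$ along $C_{|z|}$ gives
\[
\frac{du}{dv}=-\frac{v}{\,u^2+v^2+u\,}=-\frac{v}{\,|w|^2+\mathrm{Re}(w)\,},
\]
so wherever $|w|^2+\mathrm{Re}(w)>0$ the real part $u$ is a strictly decreasing function of $|v|$; equivalently, moving along $C_{|z|}$ away from the real axis lowers $\mathrm{Re}(w)$ and raises $|w|$. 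It then remains only to identify which branch value sits closest to the real axis.

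Here I would invoke the range structure of the branches (Figure \ref{Fig:LambertW}b and Corless \emph{et al.} \cite{Coreless}): the imaginary parts fall into consecutive horizontal bands nested outward as $|k|$ grows, so that for $z$ off the negative real axis one has $|\mathrm{Im}(W_0(z))|<\pi<|\mathrm{Im}(W_k(z))|$ for all $k\neq0$. Consequently $W_0(z)$ is the branch value of least imaginary part in modulus, i.e. the point of $C_{|z|}$ nearest the real axis, and by the monotonicity above it carries the smallest modulus and hence the largest real part.

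The main obstacle is that the monotonicity $du/d|v|<0$ fails precisely where $|w|^2+\mathrm{Re}(w)<0$, that is inside the small disk $(\mathrm{Re}(w)+\tfrac12)^2+(\mathrm{Im}(w))^2<\tfrac14$ about the branch point $w=-1$ ($z=-1/e$), where $W_0$ and $W_{-1}$ coalesce; this is exactly the regime of real arguments $z\in[-1/e,0)$, for which both $W_0(z)\in(-1,0)$ and $W_{-1}(z)\in(-\infty,-1)$ are real. I would treat this case by hand, solving $|u|e^{u}=|z|$ directly and noting that of its two negative roots the one in $(-1,0)$---namely $W_0(z)$---has the larger real part, while on the cut $z<-1/e$ the two conjugate values $W_0$ and $W_{-1}$ give the non-strict equality $\mathrm{Re}(W_0)=\mathrm{Re}(W_{-1})$, still consistent with the claim; the remaining branches lie farther from the real axis and are handled by the generic monotonicity argument.
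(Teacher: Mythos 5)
A preliminary remark: the paper does not prove this lemma at all --- it is imported verbatim from \cite[Lemma 3]{Shinozaki} --- so there is no in-paper argument to compare yours against, and your proposal must stand on its own. Its first half does stand: taking moduli in $W_k(z)e^{W_k(z)}=z$ gives $|W_k(z)|\,e^{\mathrm{Re}(W_k(z))}=|z|$, hence $\mathrm{Re}(W_k(z))=\ln|z|-\ln|W_k(z)|$, so maximizing the real part over branches is indeed equivalent to minimizing the modulus; all branch values lie on the level curve $\sqrt{u^2+v^2}\,e^u=|z|$, and your formula for $du/dv$ along that curve is correct.

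The genuine gap is in the step that places $W_0(z)$ nearest the real axis. The separation claim $|\mathrm{Im}(W_0(z))|<\pi<|\mathrm{Im}(W_k(z))|$ for all $k\neq0$ and all $z$ off the negative real axis is false for $k=\pm1$: the ranges of $W_{\pm1}$ are bounded in part by the real segment $(-\infty,-1]$ and contain points of arbitrarily small nonzero imaginary part (the lens-shaped regions between that segment and the curves $\{-\eta\cot\eta+i\eta:\,0<|\eta|<\pi\}$). Concretely, $z=-0.2+0.01i$ is off the negative real axis, yet $W_{-1}(z)\approx-2.54-0.08i$, whose imaginary part is nowhere near $\pi$ in modulus; the lemma's conclusion still holds there (since $|W_0(z)|\approx0.26<2.54\approx|W_{-1}(z)|$), but not by your argument. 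So the exceptional set is not the segment $[-1/e,0)$ you propose to treat by hand, but a full two-dimensional neighbourhood of the negative real axis; likewise, the level curve meets the disk $|w+\tfrac12|<\tfrac12$ (where your monotonicity reverses) whenever $|z|\le 1/\sqrt{2e}$, again a two-dimensional set of arguments, not only real $z\in[-1/e,0)$. Worse, for $|z|<1/e$ the level curve is disconnected: a bounded oval through $W_0(z)$ (it is exactly the image under $W_0$ of the circle $\{\zeta:|\zeta|=|z|\}$) and an unbounded component carrying every other branch value, so ``moving along $C_{|z|}$ away from the real axis'' can never compare $W_0(z)$ with any $W_k(z)$, $k\neq0$. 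A repair needs two different ingredients: for $|z|<1/e$, compare the components directly (every point of the oval has real part at least $W_0(-|z|)>-1$, every point of the unbounded component at most $W_{-1}(-|z|)<-1$); in the connected case, use monotonicity in the curve parameter rather than in $|\mathrm{Im}(w)|$ --- the tangent to the level curve satisfies $du/ds\propto-\mathrm{Im}(w)$, which is nonzero off the real axis, so $\mathrm{Re}(w)$ is strictly monotone along each half of the curve even inside the disk --- and then order the branch values along the curve by noting that it crosses the branch boundaries only at preimages of $-|z|$, once each, so the upper half meets the ranges of $W_0,W_1,W_2,\dots$ in that order (and the lower half those of $W_0,W_{-1},W_{-2},\dots$). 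It is this ordering, not a band estimate on imaginary parts, that forces $\mathrm{Re}(W_0(z))$ to be maximal.
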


\subsection{Time response function}
Based on previous discussion, we then can write
\[
\frac{1}{\Delta(s)} 
= \sum_{n=-\infty}^\infty \frac{C_n}{s-S_n}
\]
and the coefficient $C_k$ is then determined by the following Lemma:

\begin{lemma}
\label{lem:expansion}
Suppose $a$, $a_{j d}$ with $1\le j\le N$, and $h\in\mathbb{R}$ then for any $s\in\mathbb{C}$
\begin{align*}
\frac{1}{\Delta(s)} = \frac{1}{ s-a-\sum_{j=1}^{N} a_{j d} e^{-j s h}} 
= \sum_{n=-\infty}^\infty \frac{C_n}{s-S_n}
\stepcounter{equation}\tag{\theequation a}
\end{align*}
where
\begin{align*}
C_n=\frac{1}{1+\sum_{j=1}^{N} j a_{j d} h e^{-j S_n h}},\quad
S_n = \frac{1}{h} W_k\left(\sum_{j=1}^{N} a_{j d} h e^{-j a h} h e^{-(j-1)(S_{n}-a)h}\right)+a
\tag{\theequation b}
\label{eqn:LambertW_S_k_1}
\end{align*}
where $W_k$ is the branches where $S_n$ belongs to.
\end{lemma}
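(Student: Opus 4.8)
The plan is to treat the claim as a Mittag--Leffler (partial fraction) expansion of the meromorphic function $1/\Delta$. First I would record that $\Delta(s)=s-a-\sum_{j=1}^N a_{jd}e^{-jsh}$ is entire (a linear term plus finitely many exponentials), so $1/\Delta$ is meromorphic and its only singularities are poles, located precisely at the zeros $S_n$ of $\Delta$, i.e. the eigenvalues characterized in \eqref{eqn:LambertW_Sk}. The first genuine computation is to identify the residues. Differentiating gives $\Delta'(s)=1+\sum_{j=1}^N j h\,a_{jd}e^{-jsh}$; since each $S_n$ is a simple zero (one checks $\Delta'(S_n)\neq 0$, consistent with the one-to-one property of the Lambert branches noted after \eqref{eqn:LambertW_Sk}), the residue of $1/\Delta$ at $S_n$ equals $1/\Delta'(S_n)=\bigl(1+\sum_{j=1}^N j h\,a_{jd}e^{-jS_n h}\bigr)^{-1}=C_n$. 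Thus the coefficients in the claimed expansion are exactly the residues, as they must be.

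Next I would set up the standard Cauchy argument. Fixing $s$ distinct from every $S_n$, consider
\[
\frac{1}{2\pi i}\oint_{\Gamma_R}\frac{1}{\Delta(\zeta)}\,\frac{d\zeta}{\zeta-s},
\]
where $\{\Gamma_R\}$ is a nested family of closed contours expanding to infinity and chosen to avoid all poles. By the residue theorem this integral equals $\dfrac{1}{\Delta(s)}-\sum_{|S_n|<R}\dfrac{C_n}{s-S_n}$, the simple pole at $\zeta=s$ contributing $1/\Delta(s)$ and each enclosed zero of $\Delta$ contributing $C_n/(s-S_n)$. If the integral tends to $0$ as $R\to\infty$, rearranging yields the lemma.

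The crux, and the step I expect to be the main obstacle, is showing that this contour integral vanishes in the limit, which forces one to control both the growth of $\Delta$ and the asymptotic location of the $S_n$. In the right half-plane $\Delta(\zeta)\sim\zeta$ because the delayed exponentials decay, whereas in the left half-plane the exponential terms dominate and the zeros accumulate along the Lambert branches with imaginary parts spaced like $2\pi k/h$; hence $\Gamma_R$ must be threaded between consecutive zeros so that $1/|\Delta(\zeta)|$ stays uniformly bounded on $\Gamma_R$ while its perimeter grows only polynomially. Coupled with a residue estimate (for instance, in the single-delay case $\Delta'(S_n)=1+h(S_n-a)$ gives $C_n=O(1/|S_n|)$ with $|S_n|$ growing like $|k|$), this both makes the series $\sum_n C_n/(s-S_n)$ converge---pairing the conjugate indices $S_{-n}=\overline{S_n}$ from the previous lemma to secure convergence and real-valuedness where appropriate---and drives the boundary integral to zero. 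Establishing these bounds uniformly for general $N$ is where the real work lies; the residue identity and the skeleton of the Cauchy argument are otherwise routine.
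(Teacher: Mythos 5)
Your route is genuinely different from the paper's. The paper verifies that $C_\ell$ is the residue of $1/\Delta$ at the simple pole $S_\ell$, then expands the candidate series $\sum_k C_k/(s-S_k)$ in a Laurent series about each $S_\ell$ and invokes ``uniqueness of the Laurent expansion'' to conclude that it coincides with $1/\Delta(s)$. Your proposal instead runs the classical Mittag--Leffler/Cauchy argument: the same residue identity $C_n = 1/\Delta'(S_n) = \bigl(1+\sum_{j=1}^N j h\,a_{jd}e^{-jS_n h}\bigr)^{-1}$, followed by the residue theorem applied to $\frac{1}{2\pi i}\oint_{\Gamma_R}\frac{d\zeta}{\Delta(\zeta)(\zeta-s)}$ over expanding contours, with the lemma following once that integral is shown to vanish as $R\to\infty$. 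This is the standard route for expansions of this type (it is essentially the Bellman--Cooke argument that the paper itself cites immediately after the lemma), and your diagnosis that the entire burden falls on the contour and growth estimates is exactly right.

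However, as written your proposal stops precisely at that point: you assert that $\Gamma_R$ can be threaded between consecutive zeros so that $1/|\Delta|$ stays uniformly bounded, and that $C_n = O(1/|S_n|)$ secures convergence of the bilateral series, but you prove neither claim for general $N$, and you acknowledge as much. So the proposal is a correct skeleton with the decisive analytic step missing. It is worth saying plainly that this is not a defect relative to the paper: the paper's own proof has the same hole, only concealed. Showing that $\sum_k C_k/(s-S_k)$ has the same principal part as $1/\Delta(s)$ at every pole does not imply the two functions are equal --- their difference could a priori be any entire function, and the convergence of the bilateral sum is never addressed. Uniqueness of the Laurent expansion applies to the coefficients of a \emph{given} function; it cannot identify two functions that are merely known to share their singular parts. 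The estimates you flag as ``the real work'' are precisely what would be needed to complete either your argument or the paper's, so if you carry them out (pairing $S_{-n}=\overline{S_n}$ as you suggest, and bounding $1/|\Delta|$ on contours threaded between the Lambert branches), you will have a proof that is strictly stronger than the one in the paper.
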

\begin{proof}
Suppose $S_n$ is given by (\ref{eqn:LambertW_S_k_1}), then we have
\[
\Delta(S_n)= S_n-a-\sum_{j=1}^N a_{j d} e^{-j S_n h}=0,
\]
i.e., $S_n$ is the solution of the characteristic equation
for all $n\in\mathbb{Z}$. Let $s=S_\ell$ with any $\ell\in \mathbb{Z}$ and consider
\begin{align*}
&\sum_{k=-\infty}^\infty \left(S_\ell-a-\sum_{j=1}^N a_{j d} e^{-j S_\ell h}\right)\frac{C_k}{S_\ell-S_k} \\
& =\sum_{k=-\infty}^\infty \frac{S_\ell-a-\sum_{j=1}^N a_{j d} e^{-j S_\ell  h}}{S_\ell-S_k}\frac{1}{1+\sum_{j=1}^{N} j a_{j d} h e^{-j S_k h}}\\
&=\left(\lim_{s\to S_\ell} \frac{s-a-\sum_{j=1}^N a_{j d} e^{-j s h}}{s-S_\ell}\right)\frac{1}{1+\sum_{j=1}^{N} j a_{j d} h e^{-j S_\ell h}} \\
&=1.
\end{align*}
Next consider $s\not=S_k$ for all $k\in\mathbb{Z}$, and pick one $\ell \in\mathbb{Z}$ then $S_\ell$ is a simple pole of $\overline{\Psi}(s)$ and
the associated Laurent's expansion of $\overline{\Psi}(s)$ about $s=S_\ell$
is given by
\begin{align}
\overline{\Psi}(s) = \frac{a_{-1}}{s-S_\ell} +a_0 +a_1 (s-S_\ell)
+a_2 (s-S_\ell)^2+\cdots
\label{eqn:Laurent1}
\end{align}
where $a_{-1}$ is the residue of $\overline{\Psi}(s)$, i.e.,
\[
a_{-1}=\lim_{s\to S_\ell} (s-S_\ell)\overline{\Psi}(s)=C_\ell.
\]
Since for $k\not=\ell$,
\begin{align*}
\frac{1}{S_k-s} &= \frac{1}{S_k-S_\ell-(s-S_\ell)} \\
                &= \frac{1}{S_k-S_\ell}\frac{1}{1-\frac{s-S_\ell}{S_k-S_\ell}}
                =\frac{1}{S_k-S_\ell}\left[ 1+\frac{s-S_\ell}{S_k-S_\ell}+
                \frac{(s-S_\ell)^2}{(S_k-S_\ell)^2}+\cdots\right]\\
       &=\sum_{m=0}^{\infty} \frac{1}{(S_k-S_\ell)^{m+1}}(s-S_\ell)^m
\end{align*}
Then
\begin{align*}
&\sum_{k=-\infty}^\infty \frac{C_k}{s-S_k}
=\frac{C_\ell}{s-S_\ell}+\sum_{\stackrel{k=-\infty}{k\not=\ell}}^\infty C_k\frac{1}{s-S_k}\\
&=\frac{C_\ell}{s-S_\ell}+\sum_{\stackrel{k=-\infty}{k\not=\ell}}^\infty (-C_k)\frac{1}{S_k-s} \\
&=\frac{C_\ell}{s-S_\ell}+\sum_{\stackrel{k=-\infty}{k\not=\ell}}^\infty (-C_k)\sum_{m=0}^{\infty} \frac{1}{(S_k-S_\ell)^{m+1}}(s-S_\ell)^m \\
&=\frac{C_\ell}{s-S_\ell}+\sum_{m=0}^{\infty}
\left[\sum_{\stackrel{k=-\infty}{k\not=\ell}}^\infty \frac{-C_k}{(S_k-S_\ell)^{m+1}}\right](s-S_\ell)^m
\end{align*}
Thus the series
\begin{align}
\frac{C_\ell}{s-S_\ell}+\sum_{m=0}^{\infty}
\left[\sum_{\stackrel{k=-\infty}{k\not=\ell}}^\infty \frac{-C_k}{(S_k-S_\ell)^{m+1}}\right](s-S_\ell)^m
\label{eqn:Laurent2}
\end{align}
is also a Laurent series of $\overline{\Psi}(s)$ about 
$s=S_\ell$. By the uniqueness of Laurent's series of a function 
about its simple plot, we must have (\ref{eqn:Laurent1}) is the 
same as (\ref{eqn:Laurent2}), i.e., for any $s$ not equal any 
$S_k$, $k\in\mathbb{Z}$,
\[
\frac{1}{ s-a-\sum_{j=1}^{N} a_{j d} e^{-j s h}} 
= \sum_{k=-\infty}^\infty \frac{C_k}{s-S_k}
\]
where
\begin{align*}
C_k&=\lim_{s\to S_k} (s-S_k) \Psi(s) \\
&=\lim_{s\to S_k} \frac{(s-S_k)}{s-a-\sum_{j=1}^{N} a_{j d} e^{-j s h}} \\
&=\frac{1}{1+\sum_{j=1}^{N} j a_{j d} h e^{-j S_k h}}
\end{align*}
This concludes our proof. 
\end{proof}

By Lemma \ref{lem:expansion}, the state transition function $\Psi(t)$ can be expressed as
\begin{equation}
\Psi(t) = \sum_{k=-\infty}^{\infty} C_k e^{S_k t}
\end{equation}
where $C_k$ and $S_k$ are given in (\ref{eqn:LambertW_S_k_1}). This equation
is of the form firstly proposed by Bellman and Cooke, 1963 \cite{Bellman}. 
Similarly, the second terms in (\ref{eqn:2.3a}) can also be expressed by
\[
\overline{\Psi}(s) \sum_{j=1}^{N} a_{j d} e^{-j s h} \Phi_j(s)
= \sum_{k=-\infty}^\infty \frac{C^I_k}{s-S_k}
\]
and the coefficient $C^I_k$ is then determined by
\[
C^I_k = \lim_{s\to S_k} (s-S_k)\Psi(s) \sum_{j=1}^{N} a_{j d} e^{-j s h} \Phi_j(s)
\]
i.e.,
\begin{equation}
C^I_k=\frac{\sum_{j=1}^{N} a_{j d} e^{-j S_k h}\Phi_j(S_k) }{1+\sum_{j=1}^{N} j a_{j d} h e^{-j S_k h}}
\end{equation}
Therefore the corresponding Laplace transform of (\ref{eqn:MDDE}) becomes
\begin{align*}
X(s)=
 \sum_{k=-\infty}^\infty \frac{C_k}{s-S_k} x_0
+\sum_{k=-\infty}^\infty  \frac{C^I_k}{s-S_k}
+\mathcal{L}[\Psi(t)* b u(t);s]
\end{align*}
and its time domain response is then given by
\begin{equation}
x(t)=
\sum_{k=-\infty}^\infty C_k e^{S_k t} x_0
+\sum_{k=-\infty}^\infty  C^I_k e^{S_k t}
+\int_0^t \sum_{k=-\infty}^\infty C_k e^{S_k (t-\tau)} b u(\tau) d\tau
\label{eqn:time_response}
\end{equation}
which is the characterization of (\ref{eqn:2.5}) in Theorem \ref{thm:2.5.1}.
It is noted that when $N=1$, (\ref{eqn:time_response}) is equal to the formula given by Yi \emph{et. al.} \cite[(2.37)]{SunYi}.

Since there are infinite-countable branches of Lambert W function, we can approximate the time response with $2N+1$ (or $2N$) branches of the Lambert W function for application, i.e.,
\begin{align}
x(t)\approx
\sum_{k=-M}^M C_k e^{S_k t} x_0
+\sum_{k=-M}^M  C^{I}_k e^{S_k t}
+\int_0^t \sum_{k=-M}^M C_k e^{S_k (t-\tau)} b u(\tau) d\tau
\end{align}
where $M\in \mathbb{N}$.
We note that the upper limit of summation may be equal to $M-1$ depending on the value of $S_0$. If $S_0$ is real, then the upper limit is $M$ instead of $M-1$ and in this case when $S_{-1}=\overline{S}_0$.

\subsection{Computation of $S_n$} 
The only left problem is how to compute the value of $s_{j,k}$ (or equivalently, $S_n$) for 
$k\in\mathbb{Z}$ to satisfy (\ref{eqn:LambertW_Sk}). 
When $N=1$, i.e., for a single delay DDE,
\begin{equation}
S_k=s_{1,k}= \frac{1}{h} W_k(a_{1 d} h e^{-a h})+a
	\label{eqn:LambertW_Sk_N=1}
\end{equation}
i.e., $S_k$ is equal to the $k$-th branch of the Lambert W function
acting on the real number $a_{1 d} h e^{-a h}$.

When $N>1$,we use the \texttt{fsolve} command in Matlab to compute $s_{j,k}$
as the solution of the following equation expressed by Lambert
W functions: 
\begin{equation}
(s-a)h- W_k\left(\sum_{j=1}^{N} a_{j d} h e^{-j a h} e^{- (j-1) (s-a) h} \right) = 0. 
\label{eqn:fsolve}
\end{equation}
with initial guess based on (\ref{eqn:LambertW_Sk_N=1}), for example we
can choose:
\begin{equation}
s_{j,k}^{(0)}= \frac{1}{h} W_k\left(a_{1 d} h e^{-a h}\right)+a + i (j-2)
\begin{cases} \pi, & k=0 ,\\ 2\pi, & k> 0,\end{cases}
\end{equation}
when $N=3$, i.e., by adjusting the imaginary part of $s_{j,k}^{(0)}$ 
such that the \texttt{fsolve} command in Matlab converges. Since the roots are complex pair if it is a complex number, we only need to 
perform the computation for $k\ge 0$. The following two examples demonstrate the computation of $S_n$ for two and three delays cases, respectively.

\begin{example} \label{exmp1}
	For the two delays ($N=2$) system with unit delay $h=1$, we consider the following two cases:

\begin{description}
	\item[Case (a)] Given parameters $a=-1$, $a_{1d}=-1$, $a_{2d}=-1/2$. The characteristic equation in terms of the $k$-th branch
of Lambert W function is given by:
\[
        s+1-W_k\left( -e-\frac12  e^{1-s}\right)=0
\]
with initial guess
\begin{equation*}
	s_{j,k}^{(0)} =W_k(-e)-1 - i (j-1)
	\begin{cases} \pi, & k=0, \\2 \pi,  & k>0.\end{cases}
\end{equation*}
For $k=0$ the initial guess is $s_{j,0}^{(0)}=-0.60502+1.78819-(j-1)\pi$ and Matlab's \texttt{fsolve} command gives us
\[
S_0=s_{1,0}\approx-0.27495+ 1.47517 i,\quad
S_{-1}=s_{2,0}\approx-0.27495- 1.47517 i
\]
by taking $4$ and $3$ iterations
to achieve the iteration error within $10^{-10}$ and $S_0$ is a complex number.
Similarly, the central 10 values of $S_n$'s are computed and listed below:
\begin{align*}
	S_{0}=s_{1,0},S_{-1}=s_{2,0} & \approx -0.27495\pm1.47517i,\\
	S_{1}=s_{1,1},S_{-2}=s_{1,-1}& \approx -1.14682\pm7.24009i,\\	
	S_{2}=s_{2,1},S_{-3}=s_{2,-1}&\approx -1.27049\pm3.64513i,\\	
	S_{3}=s_{1,2},S_{-4}=s_{1,-2}&\approx -1.50747\pm13.4657i,  \\
	S_{4}=s_{2,2},S_{-5}=s_{2,-2}&\approx -1.66428\pm10.0261i.
\end{align*}
And the distribution of roots of the characteristic equation for branches $n=-5,\ldots,5$ are shown in Fig. \ref{Fig_LambertW_2_dist}(a). 

\item[Case (b)] Given parameters become $a=-1$, $a_{1d}=1/2$, $a_{2d}=1/4$. The characteristic equation is then given by: 
\[
	 s+1-W_k\left( \frac12 e+\frac14  e^{1-s}\right)=0
\]
with same initial guess:
\[
	s_{j,0}^{(0)} =W_k(\frac{1}{2}e)-1 - i (j-1) 
	\begin{cases}  \pi,  & k=0, \\ 2 \pi, & k>0.\end{cases}
\]
The central 11 values of $S_n$'s are computed by using \texttt{fsolve} with at most $7$ iterations to achieve the iteration error within $10^{-10}$ and are listed below:
	\begin{align*}
		S_{0}=s_{1,0} & \approx -0.11929,\\
		S_{1}=s_{2,1},S_{-1}=s_{2,-1}&\approx -1.36927\pm2.51759i,\\		S_{2}=s_{1,1},S_{-2}=s_{1,-1}& \approx -1.37965\pm5.30446i,\\		
		S_{3}=s_{1,2},S_{-3}=s_{1,-2}&\approx -1.82137\pm11.63900i,  \\
		S_{4}=s_{2,2},S_{-4}=s_{2,-2}&\approx -1.89204\pm8.71328i,  \\
		S_{5}=s_{1,3},S_{-5}=s_{1,-3}&\approx -2.05668\pm17.94914i, \\
		S_{6}=s_{2,3},S_{-6}=s_{1,-3}&\approx -2.13589\pm14.97910i.
	\end{align*}
	And the distribution of roots for branches $k=-5,\ldots,5$ are shown in Fig. \ref{Fig_LambertW_2_dist}(b).
\end{description}
\begin{figure}[h]
	\begin{tabular}{cc}\hspace*{-0.5cm}
		\includegraphics[width=8.2cm]{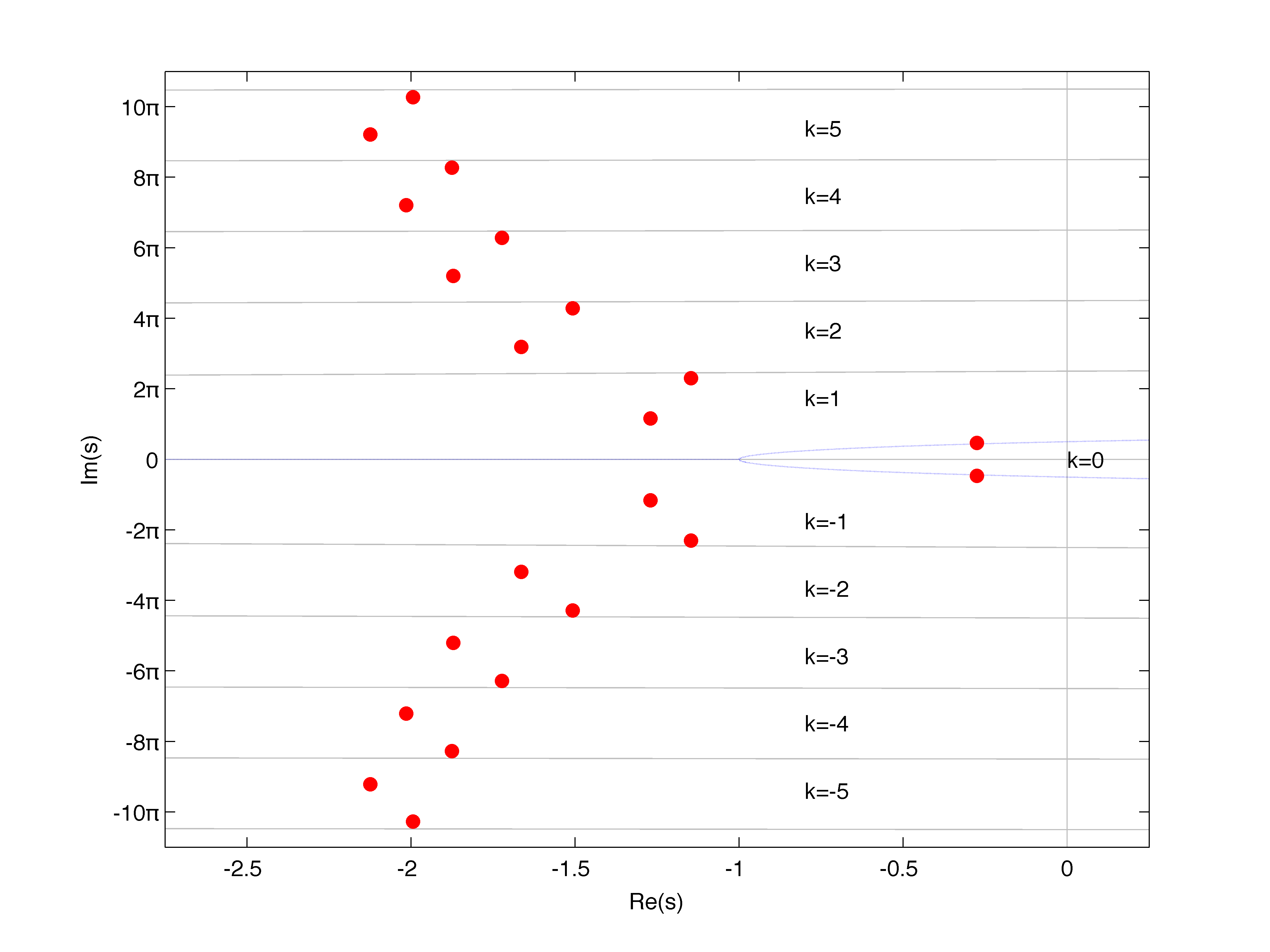} &\hspace*{-1.2cm}
		\includegraphics[width=8.2cm]{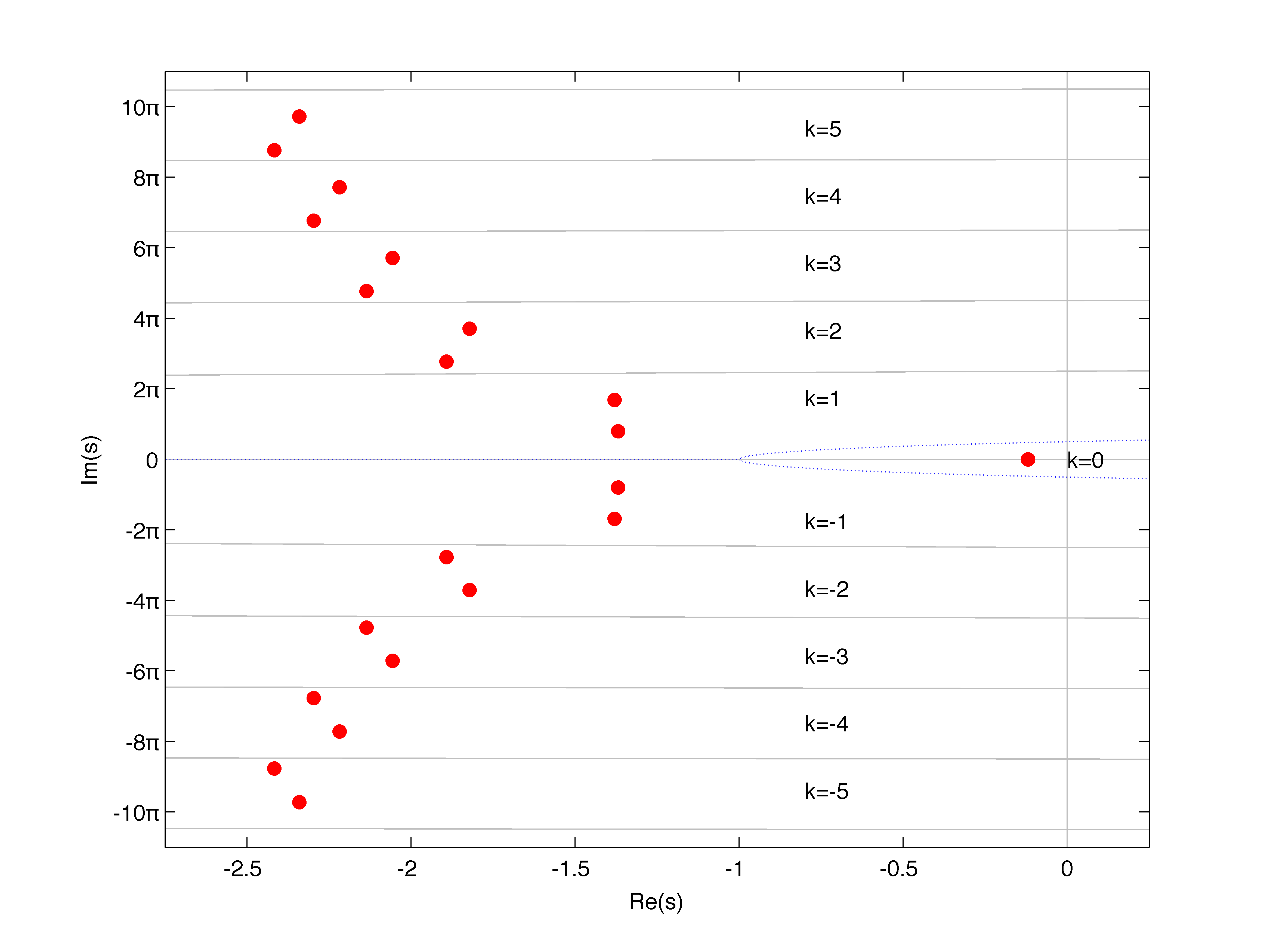}\\
		(a) $a=-1$, $a_{1d}=-1$, and $a_{2d}=-1/2$ & (b) $a=-1$, $a_{1d}=1/2$, and $a_{2d}=1/4$
	\end{tabular}
	\caption{Roots of the characteristic equation for $k=-5,\ldots,5$.}
	\label{Fig_LambertW_2_dist}
\end{figure}
As shown in this Fig. \ref{Fig_LambertW_2_dist}, it is evident that there are two roots in each 
branches of Lambert W function and roots behave in the manner with almost constant slopes of change. Since the value $S_0$ of both systems has negative real part, these systems are stable.

\end{example}
\begin{example} \label{exmp2}
For the three delays ($N=3$) equation
with given parameters $a=-1$, $a_{1d}=1/2$, $a_{2d}=-1$, $a_{3d}=-1$, and the delay
time $h=1$, the equation for calculate the characteristic equation in terms of the $k$-th branch
of Lambert W function is given by:
\[
        s+1-W_k\left( \frac12 e-e^{1-s}-e^{1-2s}\right)=0
\]
with initial guess
\begin{equation*}
	s_{j,k}^{(0)} =W_k(\frac12e)-1 + i (j-2)
	   \begin{cases}
	         \pi/2, & k=0, \\
	         3\pi,  & k>0.
	   \end{cases}
\end{equation*}
The central 14 values of $S_n$'s are computed by using \texttt{fsolve} with at most $11$ iterations to achieve the iteration error within $10^{-10}$ which are listed below:
	\begin{align*}
		S_{0}=s_{2,0},S_{-1}=s_{1,0} & \approx  0.08945\pm0.86785i,\\
		S_{1}=s_{2,1},S_{-2}=s_{2,-1}& \approx -0.41586\pm4.56389i,\\
		S_{2}=s_{3,1},S_{-3}=s_{3,-1}& \approx -0.52264\pm6.86847i,  \\
	    S_{3}=s_{1,1},S_{-4}=s_{1,-1}& \approx -0.62749\pm2.84865i,\\		
		S_{4}=s_{2,2},S_{-5}=s_{2,-2}& \approx -0.74691\pm10.85396i,  \\
		S_{5}=s_{3,2},S_{-6}=s_{3,-2}& \approx -0.75648\pm13.13363i, \\
		S_{6}=s_{1,2},S_{-5}=s_{1,-2}& \approx -0.89966\pm9.02658i.
	\end{align*}
	And the distribution of roots for branches $k=-5,\ldots,5$ are shown in Fig. \ref{Fig_LambertW_3_dist}.
\begin{figure}[h]
		\includegraphics[width=8.2cm]{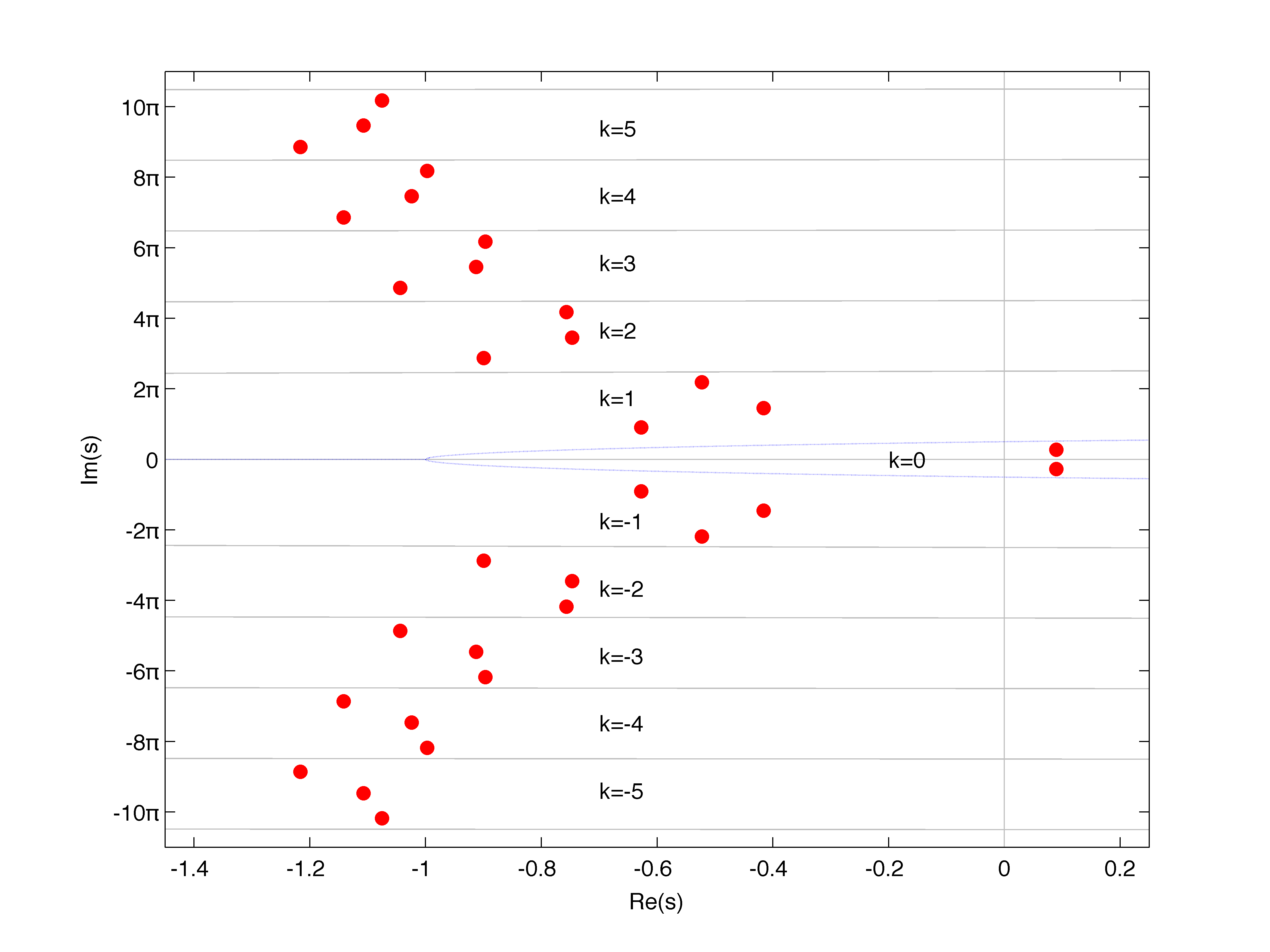} 
	\caption{Roots of the characteristic equation for $k=-5,\ldots,5$ when  $a=-1$, $a_{1d}=1/2$, $a_{2d}=-1$, $a_{3d}=-1$.}
	\label{Fig_LambertW_3_dist}
\end{figure}
As shown in this Fig. \ref{Fig_LambertW_3_dist}, it is evident that there are three roots in each 
branches of Lambert W function except for $k=0$ branch which has two roots inside. And since $\mathrm{Re}(S_0)>0$, this system is unstable.
\end{example}

\section{Illustrative Examples}
In this section, examples with single, two and three delays cases are presented to illustrate the application
of our method to compute their corresponding time responses. Since there are
some references deal with the time response of single delay system, we just
present the basic information to construct the approximate solution. But for
two-delay system, more detail discussion will be addressed.

\subsection{Single delay case}
Consider the following DDE:
\begin{align*}
\dot{x}(t)=-x(t)- x(t-1)+ \cos t, t>0 \\
x(0)=x_0=1,\quad x(t)=\phi(t)=1, t\in[-1,0)
\end{align*}
The corresponding eigenvalue $S_k$ is computed by the Lambert W function
$
S_k = W_k(-e)-1
$
and $C_k=1/(1-e^{-S_k})$ for $k=-3,-2,\ldots,2$.
The state transition function is then given by
\[
\Psi(t)=\sum_{k=-\infty}^{\infty} C_k e^{-S_k t}
\approx\sum_{k=-3}^{k=2} C_k e^{-S_k t}
\]
Also, since
\[
\Phi_1(s)=\int_{-1}^0 \phi(t) e^{-st} dt=\frac{e^s-1}{s}
\]
and
\[
C^I_k=(1-C_k) \frac{\Phi_1(S_k)}{h}
\]
and thus the total response of this DDE is then given by
\[
x(t)\approx \sum_{k=-3}^{2} (C_k x_0+C^I_k) e^{-S_k t}
       +\sum_{k=-3}^{2} C_k \int_0^t e^{-S_k (t-\tau)} \cos(\tau) d\tau
\]
and the first part of $x(t)$ is called the initial condition response.
The corresponding values for $S_k$ and $C_k$ are listed below:
\begin{equation*}
\begin{array}{lll}
S_{0,-1}\approx -0.6050\pm1.7882i, \quad
&S_{1,-2}\approx -2.0528\pm7.7184i,  \quad
&S_{2,-3}\approx -2.6474\pm14.0202i; \\
C_{0,-1}\approx 0.2712\mp0.3477i,  \quad
&C_{1,-2}\approx -0.0008867\mp0.1296i,  \quad
&C_{2,-3}\approx -0.003286\mp0.07117i.
\end{array}
\end{equation*}
The initial condition and total responses of this single delay system are given in Fig. \ref{Fig_LambertW_1}.

\begin{figure}[htb]
	\hspace*{-2em}
	\begin{tabular}{cc}
	\includegraphics[width=7.25cm]{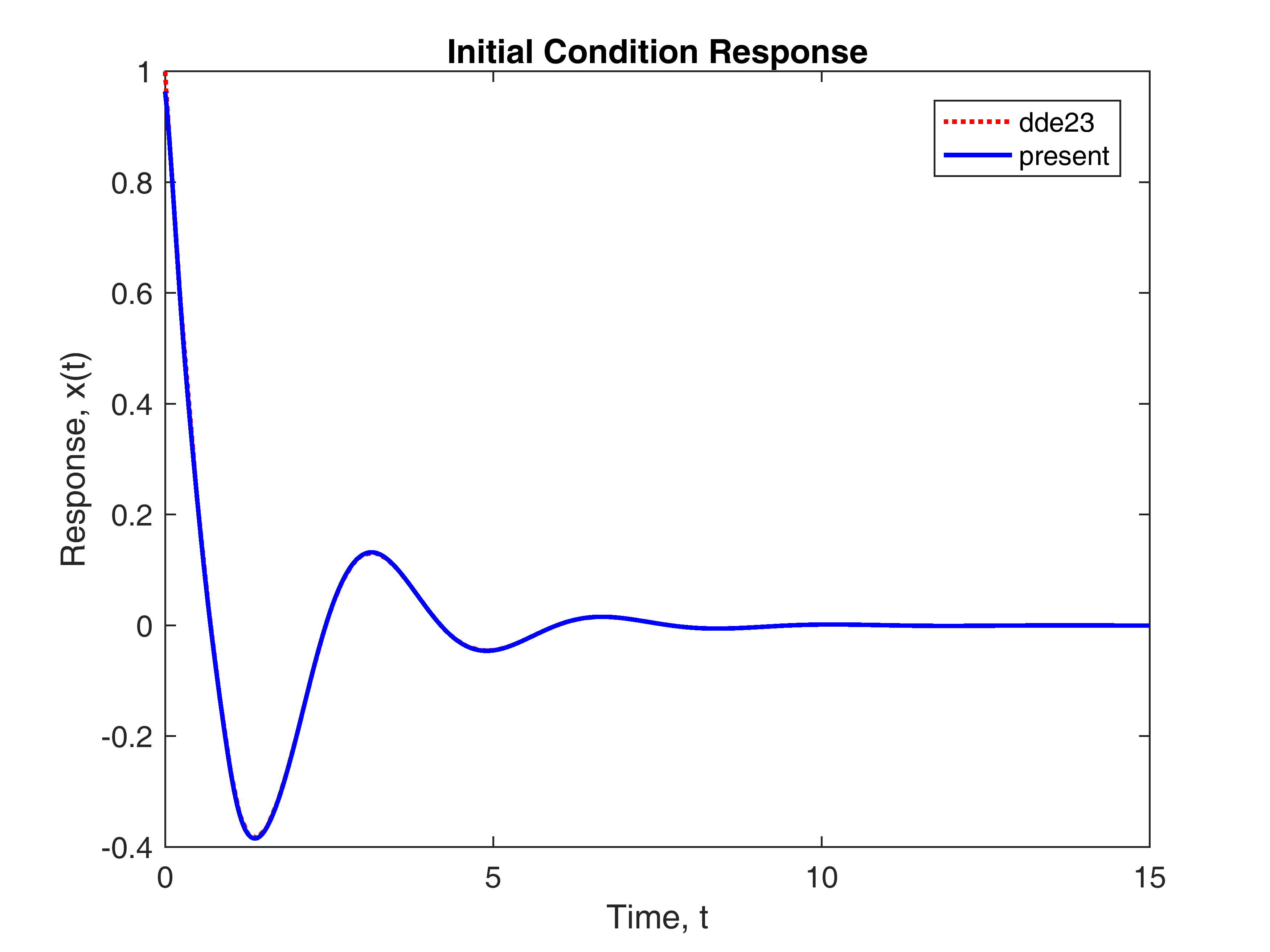} &
	\hspace*{-3em}
	\includegraphics[width=7.25cm]{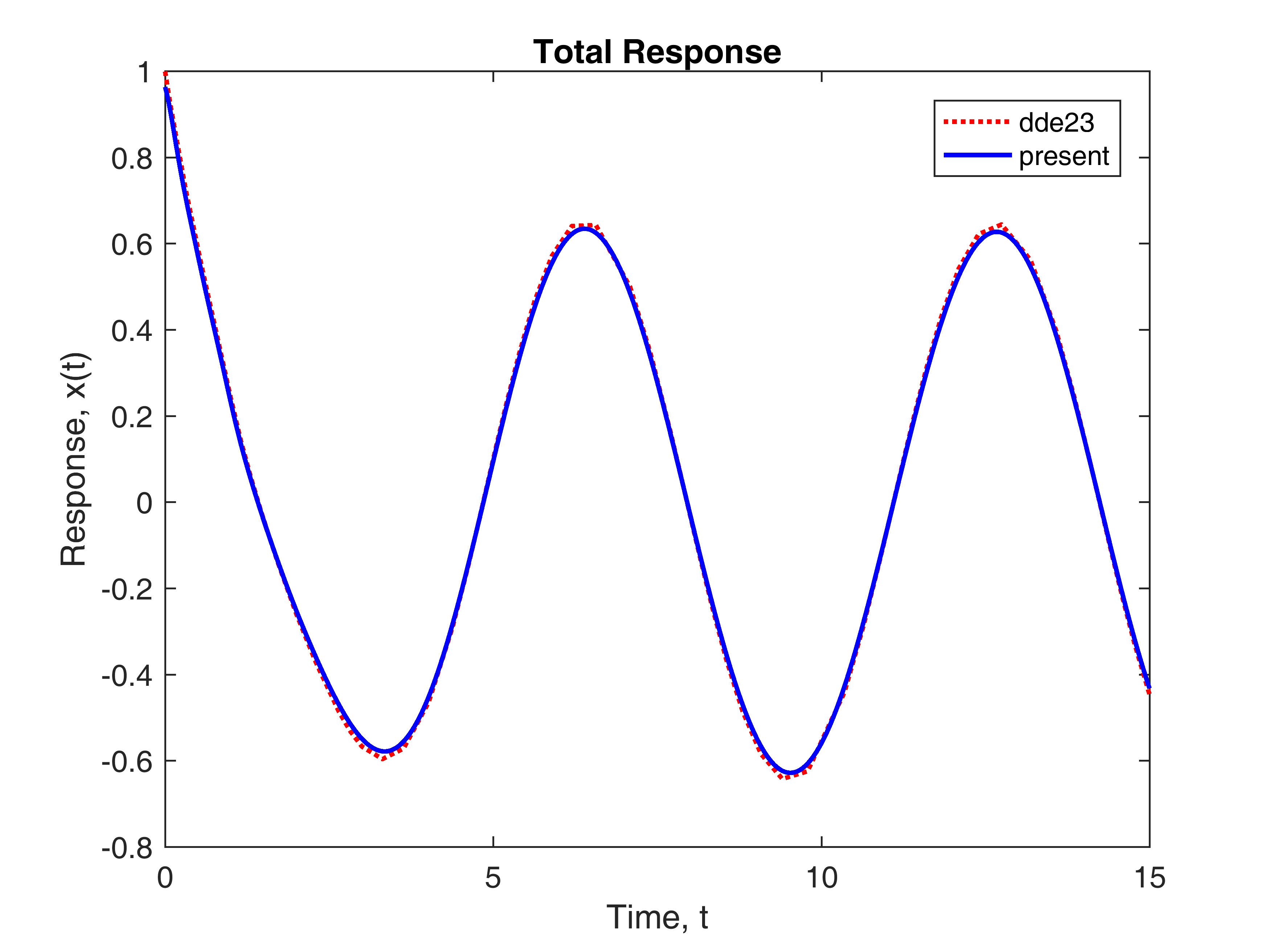}\\
	 (a) Initial response & (b) Total response\\
	\end{tabular}
\caption{Time responses of the single delay system with $a=-1$ and $a_{1d}=-1$.}
\label{Fig_LambertW_1}
\end{figure}

\subsection{Two-delay case}

Consider the following differential equation with two delays:
\begin{align*}
\dot{x}(t)=-x(t)- x(t-1) - \frac12 x(t-2)+ \cos t, t>0 \\
x(0)=x_0=1,\quad x(t)=\phi(t)=1, t\in[-2,0)
\end{align*}
The corresponding eigenvalues $S_n$ are computed in \textbf{Case (a)} of Example \ref{exmp1} in previous section. 
Since
\[
\Phi_1(s)=\int_{-1}^0 \phi(t) e^{-st} dt=\frac{e^s-1}{s},\quad
\Phi_2(s)=\int_{-2}^0 \phi(t) e^{-st} dt=\frac{e^{2s}-1}{s},
\]
thus coefficients for time responses are given by
\begin{align*}
C_n&=1/(1-e^{-S_n}-\frac12e^{-2S_n}) \\
C^I_n&=C_n \left(-e^{-S_n}\Phi_1(S_n)-\frac12 e^{-2 S_n}\Phi_2(S_n)\right)
\end{align*}
and
their values are described in Table \ref{tab:LambertW_2_mode}. 
The state transition function is then approximated by
\[
\Psi(t)=\sum_{n=-\infty}^{\infty} C^N_n e^{-S_n t}
\approx\sum_{n=-11}^{n=10} C^N_n e^{-S_n t}
\]

The first three branches (i.e., $k=0, 1, 2$ ) contribution to the initial condition response are shown in Fig. \ref{Fig_LambertW_2_mode}(a).
It is obviously, the mode shape due to the branch $k=0$ is similar to the solution
obtained by \texttt{dde23} of Matlab command which is referred here as exact solution.
And if we increase the number of branches in approximate the time response, then
the maximal absolute error between computed solution and the one obtained by \texttt{dde23} is shown in Fig. \ref{Fig_LambertW_2_mode}(b). This figure shows as the number of branches increases the error is reduced as well with slow rate. 
Therefore the total response of this two-delay system is then given by
\[
x(t)\approx \sum_{n=-11}^{10} (C^N_n +C^{IN}_n) e^{-S_n t}
+\sum_{n=-11}^{10} C^N_n \int_0^t e^{-S_n (t-\tau)} \cos(\tau) d\tau
\]
and the first term gives the initial condition response. The initial condition response and total response of this system are given in Fig. \ref{Fig_LambertW_2} which
are very close to the responses produced by \texttt{dde23}.

\begin{figure}[h]
	\hspace*{-2em}
	\begin{tabular}{cc}
		\includegraphics[width=7.25cm]{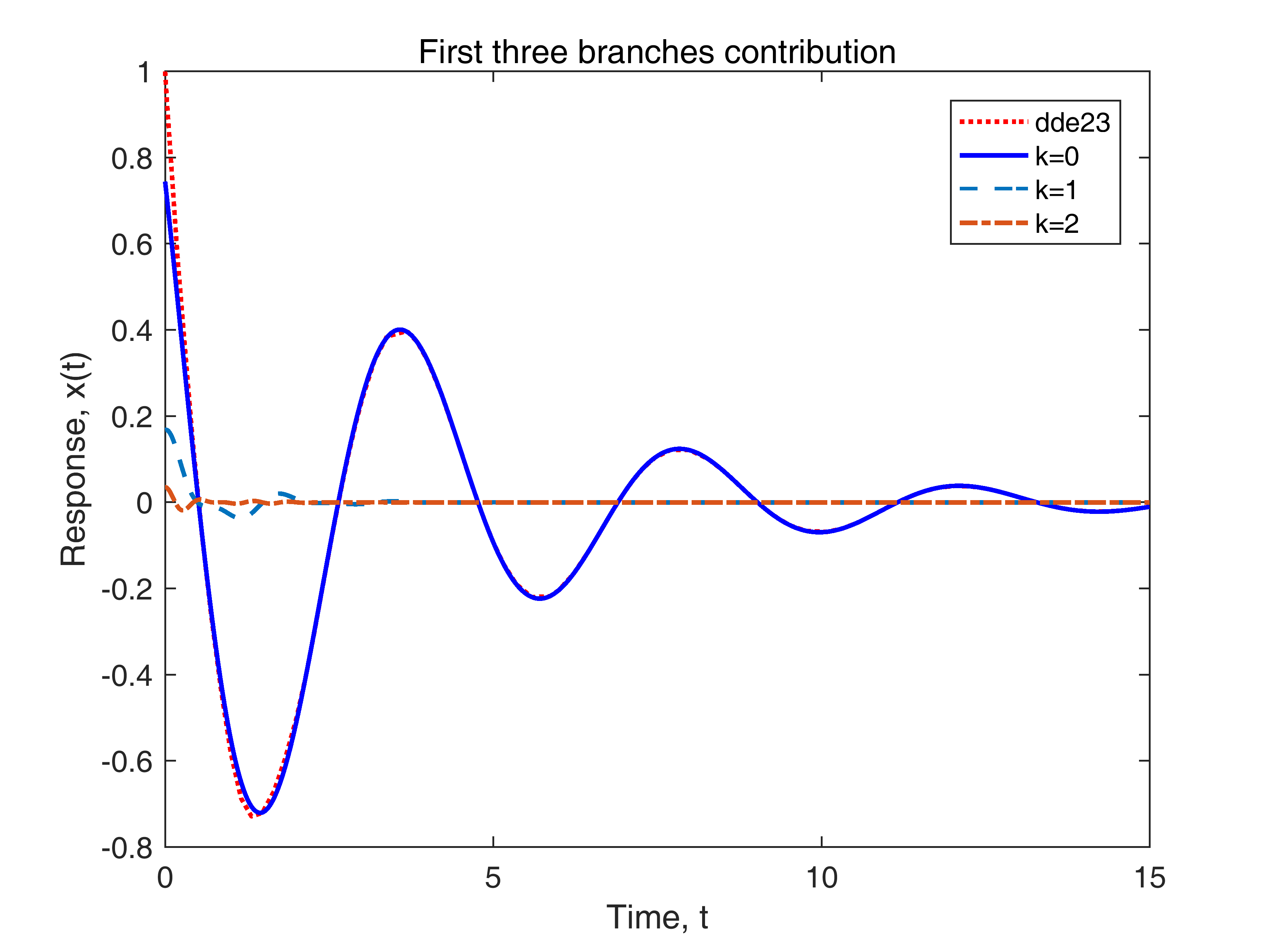} &
	\hspace*{-3em}
		\includegraphics[width=7.25cm]{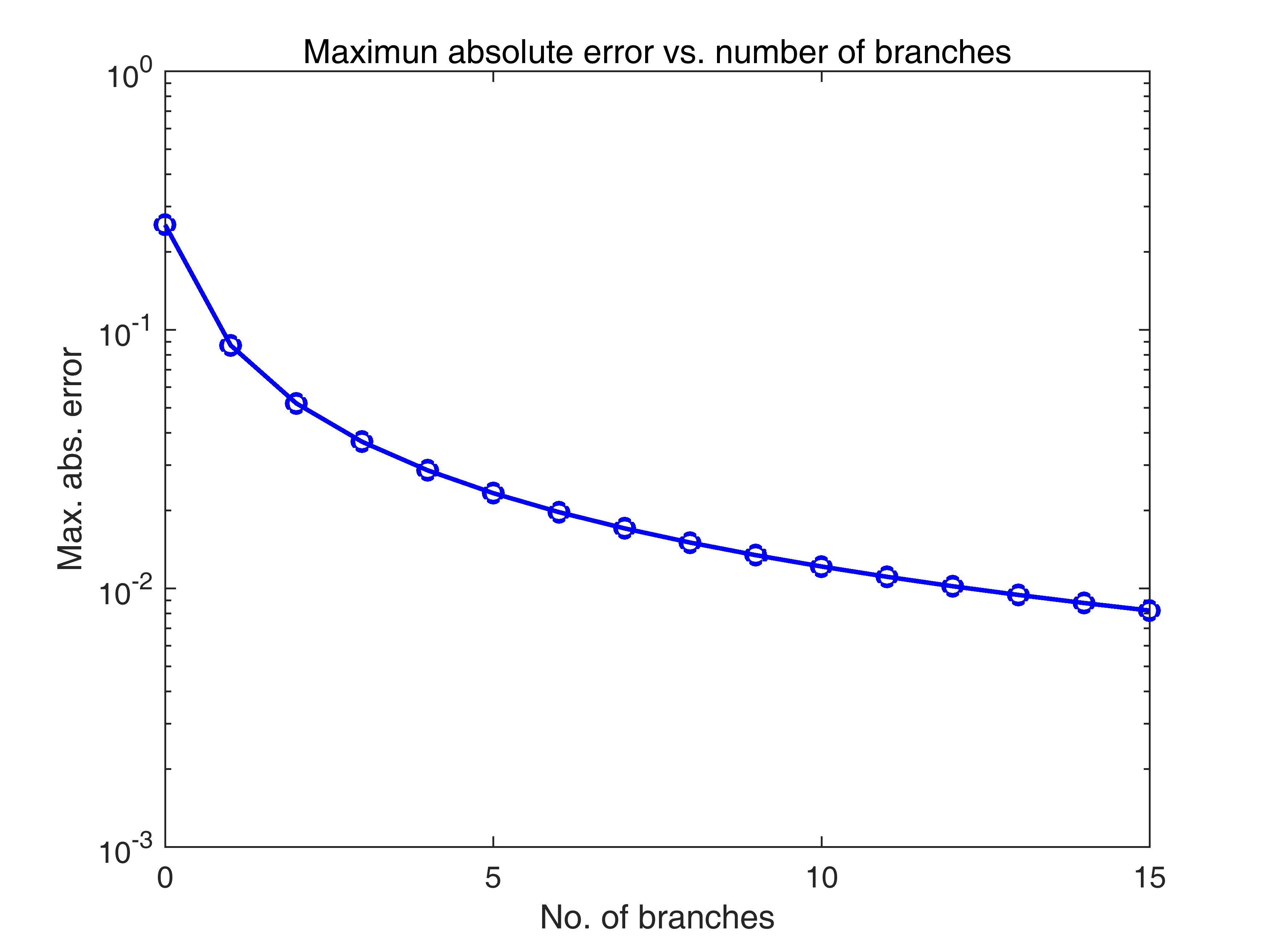}\\
		(a) Effect of first three branches ($k=0, 1, 2$) & (b) Error vs. no. of branches
	\end{tabular}
	\caption{Modal effect of initial responses due to branches of Lambert W function}
	\label{Fig_LambertW_2_mode}
\end{figure}

\begin{figure}[h]
	\hspace*{-2em}
	\begin{tabular}{cc}
		\includegraphics[width=7.25cm]{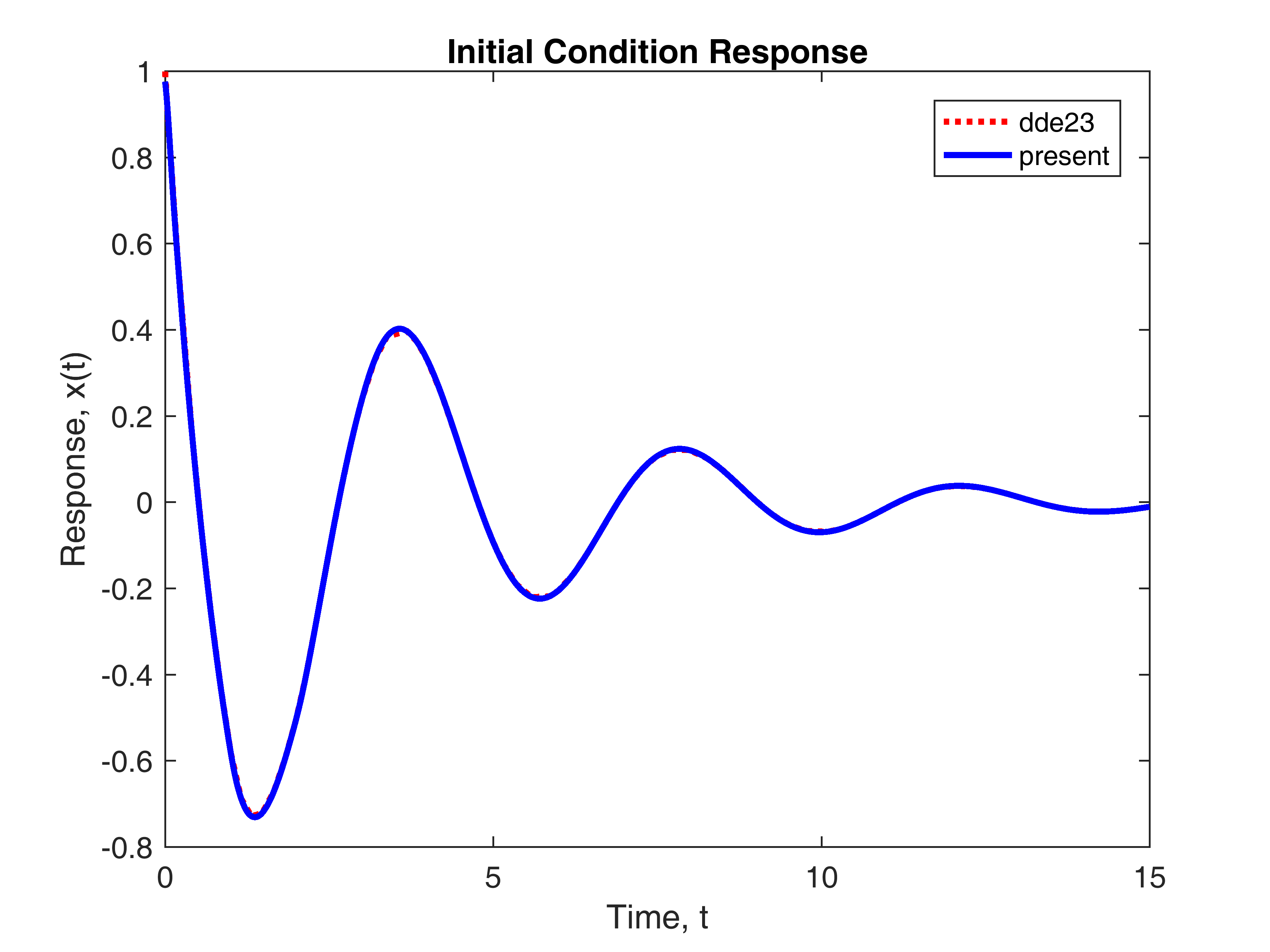} &
  	\hspace*{-3em}
		\includegraphics[width=7.25cm]{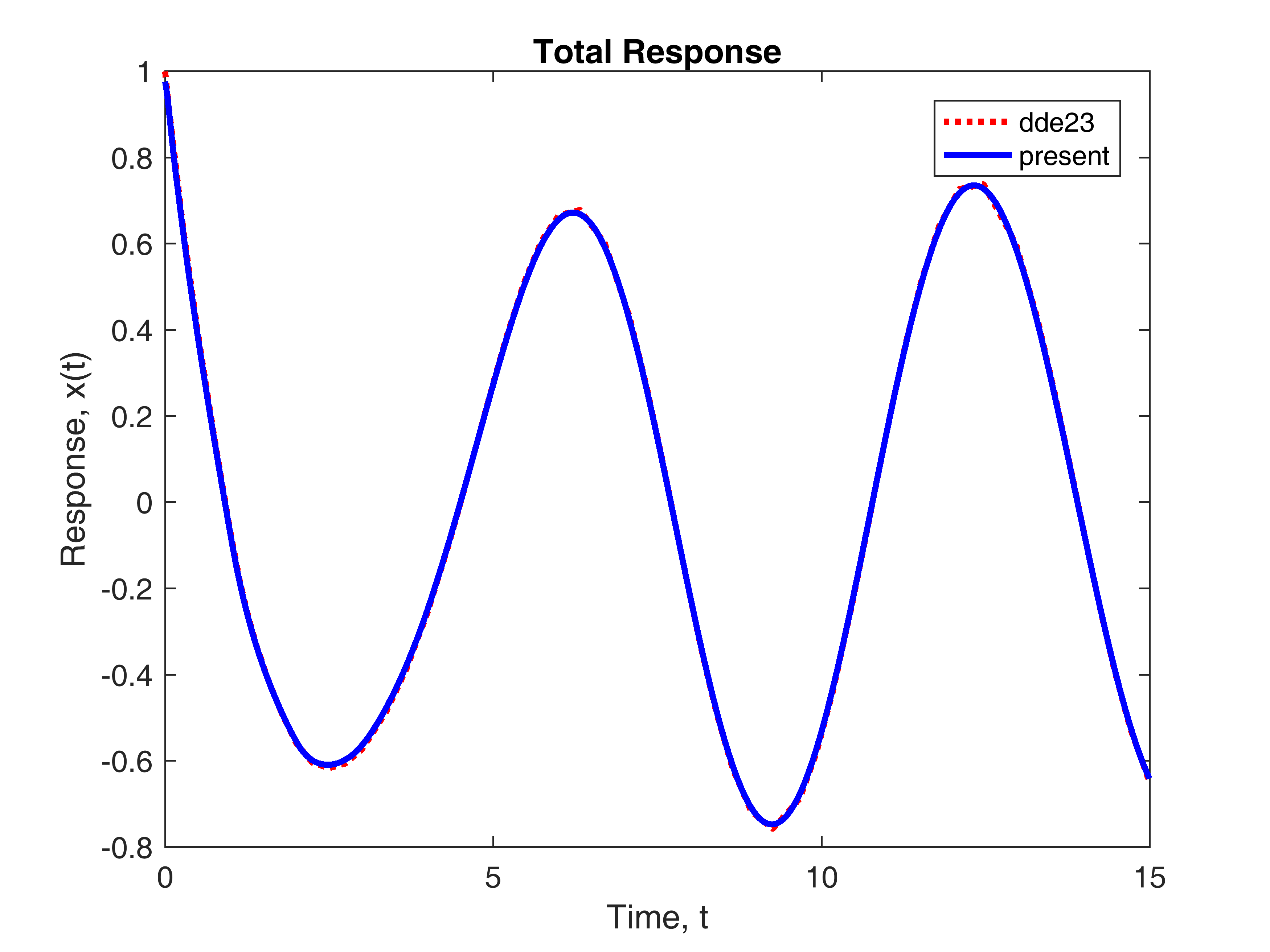}\\
		(a) Initial response & (b) Total response
	\end{tabular}
	\caption{Time Responses of the two-delay systems with $a=-1$, $a_{1d}=-1$, and $a_{2d}=-1/2$.}
	\label{Fig_LambertW_2}
\end{figure}

\begin{landscape}
\renewcommand{\arraystretch}{2} 
\setlength\arraycolsep{1.5pt}
\begin{table}
	\caption{Eigenvalues $S_n$ with associated coefficients $C_n$ and $C_n^I$ for response computation }
	\label{tab:LambertW_2_mode}
	\begin{tabular}{|c|c|c|}
		\hline
		$S_n$ & $C_{n}$ & $C_{n}^I$ \\
		\hline
	$S_{0}=s_{1,0},S_{-1}=s_{2,0} \approx -0.27495\pm1.47517i$ &
    $C_{0},  C_{-1}   \approx  0.27626  \mp 0.17588 i$ &
    $C_{0}^I,C_{-1}^I \approx  0.096135 \pm 0.57465 i$   \\
	$S_{1}=s_{1,1},S_{-2}=s_{1,-1}\approx -1.14682\pm7.24009i$, &
    $C_{1},  C_{-2}   \approx  0.017012 \mp 0.080385 i$ &
    $C_{1}^I,C_{-2}^I \approx  0.010973 \pm 0.081826 i$	\\	
	$S_{2}=s_{2,1},S_{-3}=s_{2,-1}\approx -1.27049\pm3.64513i$ &
    $C_{2},  C_{-3}   \approx -0.030157 \mp 0.10209 i$ & 
    $C_{2}^I,C_{-3}^I \approx  0.086161 \pm  0.061884 i$ \\	
	$S_{3}=s_{1,2},S_{-4}=s_{1,-2}\approx -1.50747\pm13.4657i$ &
    $C_{3},  C_{-4}   \approx  0.0050331 \mp 0.042141 i$ &
    $C_{3}^I,C_{-4}^I \approx  0.0027971 \pm 0.042198 i$	\\
	$S_{4}=s_{2,2},S_{-5}=s_{2,-2}\approx -1.66428\pm10.0261i$ &
    $C_{4},  C_{-5}   \approx -0.0084729 \mp -0.041680 i$ &
    $C_{4}^I,C_{-5}^I \approx  0.018246  \pm  0.037945 i$ \\
	$S_{5}=s_{1,3},S_{-6}=s_{1,-3}\approx -1.77287\pm19.72659i$ &
    $C_{5},  C_{-6}   \approx  0.0025181 \mp 0.028227 i$ &
    $C_{5}^I,C_{-6}^I \approx  0.0010598 \pm 0.028234 i$ \\
	$S_{6}=s_{2,3},S_{-7}=s_{2,-3}\approx -1.87170\pm16.34717i$ &
    $C_{6},  C_{-7}   \approx -0.0043406 \mp 0.026636 i$ &
    $C_{6}^I,C_{-7}^I \approx  0.0082865 \pm 0.025521 i$ \\
	$S_{7}=s_{1,4},S_{-8}=s_{1,-4}\approx -1.87589\pm25.99734i$ &
    $C_{7},  C_{-8}   \approx  0.0015665  \mp 0.021147 i$ &
    $C_{7}^I,C_{-8}^I \approx  0.00046741 \pm 0.021151 i$ \\
	$S_{8}=s_{2,4},S_{-9}=s_{2,-4}\approx -2.01488\pm22.65195i$ &
    $C_{8},  C_{-9}   \approx -0.0027376 \mp 0.0196560194) i$ &
    $C_{8}^I,C_{-9}^I \approx  0.0048633 \pm 0.019165 i$ \\
	$S_{9}=s_{1,5},S_{-10}=s_{1,-5}\approx -1.99436\pm32.27231i$ &
    $C_{9},  C_{-10}   \approx  0.0010945 \mp 0.016880 i$&
    $C_{9}^I,C_{-10}^I \approx  0.00021337 \pm 0.016884 i$ \\
	$S_{10}=s_{2,5},S_{-11}=s_{2,-5}\approx -2.124656\pm28.94953i$ &
    $C_{10},  C_{-11}   \approx -0.0019236 \mp 0.015603 i$ &
    $C_{10}^I,C_{-11}^I \approx  0.0032516 \pm 0.015339 i$ \\
    \hline		
	\end{tabular}
\end{table}
\end{landscape}

\renewcommand{\arraystretch}{1} 
\setlength\arraycolsep{0.2em}

\subsection{Three-delay case}

Consider the following differential equation with three delays:
\begin{align*}
\dot{x}(t)=-x(t)+\frac12 x(t-1) - x(t-2) - x(t-3)+ \cos t, t>0 \\
x(0)=x_0=1,\quad x(t)=\phi(t)=1, t\in[-3,0)
\end{align*}
The corresponding central 14 eigenvalues $S_n$ is computed in Example \ref{exmp2} of previous section. 
Since
\[
\Phi_j(s)=\int_{-j}^0 \phi(t) e^{-st} dt=\frac{e^{j s}-1}{s}, \quad 1\le j\le 3,
\]
thus coefficients for time responses are given by
\begin{align*}
C_n&=1/(1+\frac12 e^{-S_n}- e^{-2S_n}- e^{-2S_n}), \\
C^I_n&=C_n \left(\frac12 e^{-S_n}\Phi_1(S_n)- e^{-2 S_n}\Phi_2(S_n)- e^{-3 S_n}\Phi_3(S_n)\right)
\end{align*}
and their values are omitted here. We use eigenvalues corresponding to the branches $k=0,\pm1,\ldots,\pm 5$ of Lambert W function for approximation (i.e., we use
32 eigenvalues in total), and thus
the state transition function is approximated by
\[
\Psi(t)=\sum_{n=-\infty}^{\infty} C^N_n e^{-S_n t}
\approx\sum_{n=-11}^{n=10} C^N_n e^{-S_n t}
\]
and the total response of this two-delay system is then given by
\[
x(t)\approx \sum_{n=-15}^{15} (C^N_n +C^{IN}_n) e^{-S_n t}
+\sum_{n=-15}^{15} C^N_n \int_0^t e^{-S_n (t-\tau)} \cos(\tau) d\tau
\]
with the first term for the initial condition response. The initial condition response and total response of this system are given in Fig. \ref{Fig_LambertW_3} which
are almost distinguishable to the responses generated by \texttt{dde23}.

\begin{figure}[h]
	\hspace*{-2em}
	\begin{tabular}{cc}
		\includegraphics[width=7.25cm]{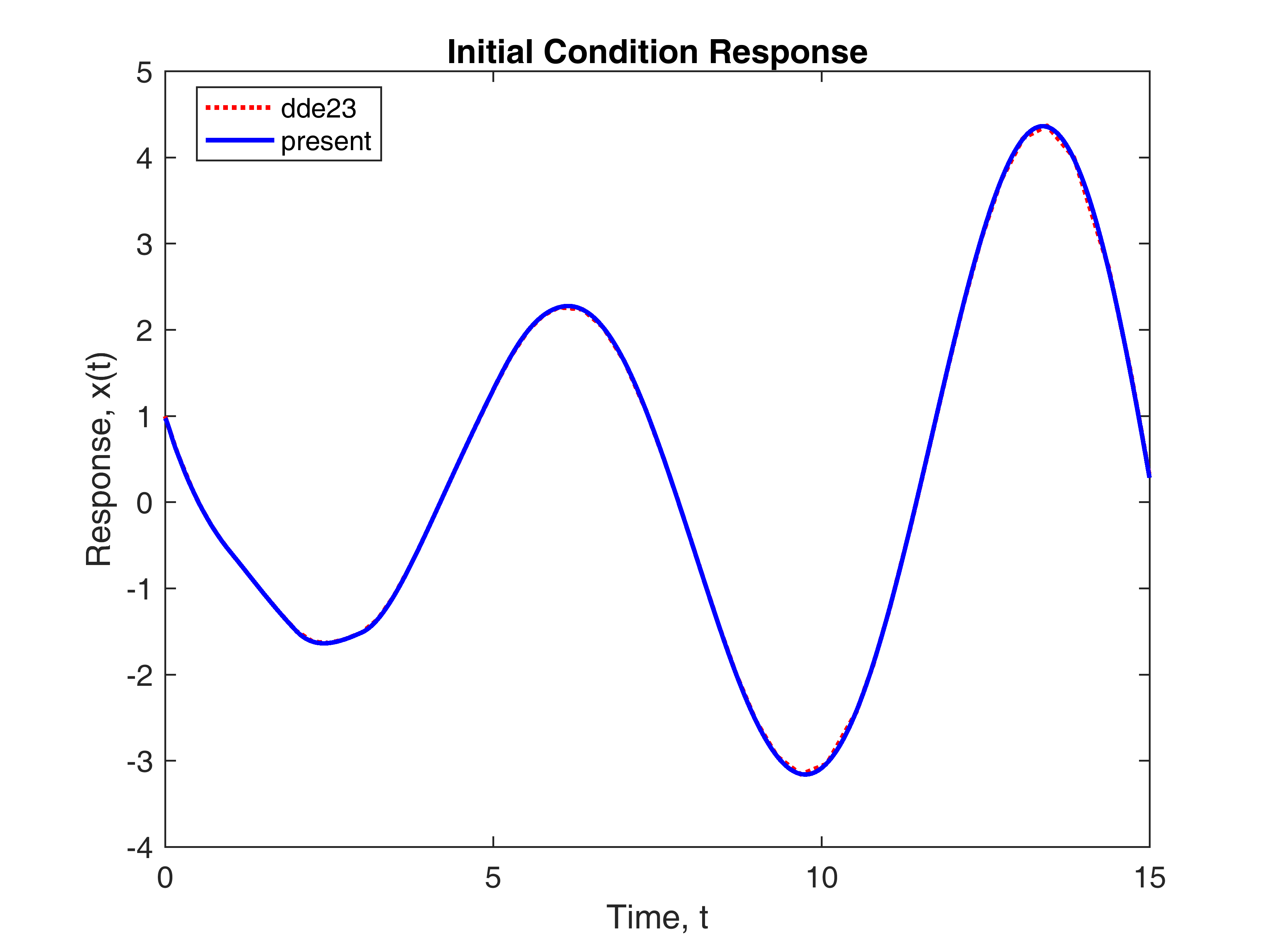} &
		\hspace*{-3em}
		\includegraphics[width=7.25cm]{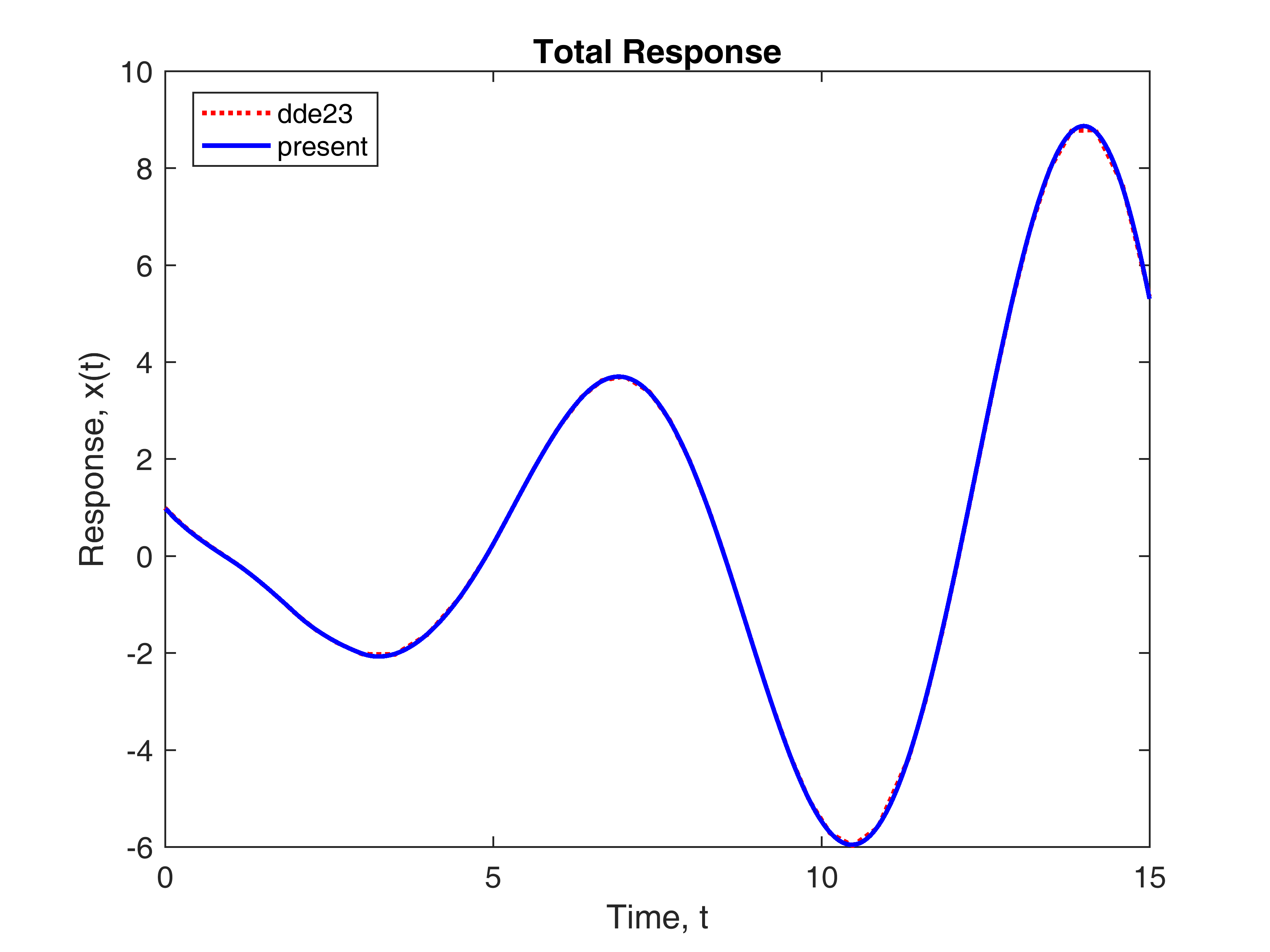}\\
		(a) Initial response & (b) Total response
	\end{tabular}
	\caption{Time Responses of the three-delay systems with $a=-1$, $a_{1d}=1/2$, $a_{2d}=-1$, and $a_{3d}=-1$.}
	\label{Fig_LambertW_3}
\end{figure}

\section{Conclusion}
Lambert W function has been widely applied in the research of science and engineering problems. In 
this paper, we construct the time response of scalar systems with multiple 
discrete delays based on the Laplace transform. We first establish the state
transition function as the sum of infinite series of exponentials acting on 
eigenvalues inside countable branches of the Lambert W functions. And 
the time response function is then obtained in terms of initial condition, 
preshape function, and the convolution between state transition function and 
input function. Eigenvalues in each branch of Lambert W function are computed 
by a numerical iteration. Once the eigenvalues are obtained, the 
approximation of time responses is calculated. Numerical examples are 
presented to conform the successful application of Lambert W function in 
establishing the time response of multi-delay systems. Our main contribution
is to construct an analytical formula in representing the time response which
can act as the concise foundation of designing the feedback controller for 
multi-delay systems.

\subsection*{Acknowledgments}
This research was partially supported by grant MOST 104-2115-M-029-004.
 (H.-N. Huang).

\end{document}